\DeclareMathOperator{\E}{\mathbb{E}}
\DeclareMathOperator{\Geometricdist}{Geometric}
\DeclareMathOperator{\expdist}{Exp}
\DeclareMathOperator{\nbdist}{NB}
\newtheoremstyle{nonum}{}{}{\itshape}{}{\bfseries}{.}{ }{\thmnote{#3}}
\theoremstyle{nonum}
\newtheorem{thm}{}
\newtheorem{lemma}{}
\title{Speeding Up Network Simulations Using Discrete Time}
\author{Aaron Lucas \and Benjamin Armbruster\footnote{%
Both authors contributed equally.}}
\date{\today}
\begin{document}
\maketitle

\begin{abstract}
We develop a way of simulating disease spread in networks faster at the cost of some accuracy.  Instead of a discrete event simulation (DES) we use a discrete time simulation.  This aggregates events into time periods.  We prove a bound on the accuracy attained.  We also discuss the choice of step size and do an analytical comparison of the computational costs.  Our error bound concept comes from the theory of numerical methods for SDEs and the basic proof structure comes from the theory of numerical methods for ODEs.
\end{abstract}

\section{Introduction} 
Traditional models for analyzing the spread of an infectious disease in a population rely on faulty assumptions of human behavior, such as random mixing, homogeneity and the lack of persistence of partnerships \cite{HazhirRahmandadJohn1978,bansal2007individual}. Network simulation models, which remedy these drawbacks, are at the forefront of health policy research in analyzing the control and prevention of infectious diseases \cite{kretzschmar2000sexual,morris1997concurrent}. Traditional compartmental models aggregate individuals in a population into different bins and only count the total number of individuals in certain states; whereas, network models track each individual's state separately. The advantage of tracking individuals separately instead of as groups allows the modeler to pinpoint  the origin of an outbreak and to target individuals with highly specific characteristics. Network models also have the unique ability to model network-based interventions such as contact tracing, partner delivered therapy or concurrency reduction \cite{armbruster2007contact,golden2005expedited,enns2011assessing}. 

Network models can range from simplistic and static to complex and dynamic \cite{enns2011assessing,pastor2001epidemic}. Underlying every network model is the notion of nodes and edges. Nodes usually represent individuals in the population, but can also represent cities or places \cite{hunter2008ergm,meyers2003applying,hufnagel2004forecast}. Edges connect the nodes and are defined for a duration of time that sufficient transmission of the infectious agent is possible if one of the nodes is infected.  In the simplest case there would only be one type of edge, but one can imagine using different types of edges to denote varying kinds of contact, such as sexual, friendship, etc. A simple network model of an infectious diseases would track over time the state of each node. The state of nodes would include the infection status of the node and might include other dynamic or stable endogenous or exogenous variables. 

Many network models are extensions of either an SI or SIS process. These contact processes are stylized models of disease spread where each node in the network is either in the susceptible or infected state with transmission occurring along edges.  Along with the random walk and the voter model, the contact process is one of the prototypical stochastic processes on networks. The SI model is for an incurable disease where once a node is infected it will remain so indefinitely while in the SIS model an infected node becomes susceptible after the infection has passed or is cured.  In this paper, we focus on SI and SIS processes on a network because of their foundational nature.  

Network models often need to be both large (in the number of nodes) and run for long periods of time to realistically model the long-term effects of an intervention in a large community.  In addition such simulations require much time (in replications or in clock time), in order to get a sufficiently precise estimate of the spread of an infection or the effects of different interventions. Indeed, we often need many replications of such a simulation to estimate the variability of the results, a quantity which is also important when designing large clinical trials or epidemiological studies to ensure that the results will be statistically significant.  

Given the computational effort involved in network modeling, it is natural to find ways to speed up the model efficiently with minimal side effects. A practical starting point is the taxonomy of models in \cite{Brennan2006}, which might point to a simpler type of model (e.g., a compartmental model) that may sometimes be appropriate.  Another, more mathematical approach to speeding up stochastic models is using a diffusion approximation.  These take a stochastic \emph{compartmental} model and take the limit as the population goes to infinity.  Using a functional central limit theorem, this limit becomes a stochastic differential equation \cite{Pollett2001}.  However, this approach does not work for network models since each node in our population is unique. Moreover, it is unclear how one would define the limit as the population grows while keeping the network structure constant.

We develop a different way of approximating an infection process in a population, one which is specific for network modeling.  Instead of using a natural discrete event simulation (DES) we use a discrete time simulation (DTS).  This aggregates events into time periods; for example with monthly time steps we would only update the infection status of nodes once a month instead of after each infection or cure.  Naturally, there is a loss in accuracy involved, especially for larger time steps.  With large time steps, a node infected in one time step might infect a second in the same time step, an event that would be captured in the DES but not in the DTS. We prove a strong convergence result and furthermore, a bound on the accuracy attained as a function of the time step.  This error bound allows us to do an analytical comparison of the computational costs for different step sizes and to estimate the maximum step size for the error that we can tolerate in our simulation. Superficially, our technique is similar to a method called tau-leaping, in that we aggregate events across time steps in a discrete event simulation to transform the model into a discrete time simulation \cite{Cao2006}. However, the similarities end there. In tau-leaping, only the number of individuals in each infection state is kept track of, similar to a compartmental model. In a network model, we care about each individual's position in the network. 

Bounding the error of a discrete time simulation of a continuous time process and proving a strong convergence result is a common issue in the theory of numerical methods for SDEs as the numerical methods are usually discrete time simulations \cite{higham2001algorithmic}.  However our stochastic process is not derived from Brownian motion as is the case for most SDEs.  Thus, while the concepts are similar, our proof approach is somewhat different.  
Our proof structure follows the convergence proofs for numerical methods for deterministic ODEs \cite{ascher1998computer}.

We assume our SI and SIS simulations on a network are Markov; that is, for the next state transition all we need to know is the current state of the network, or equivalently that the hazard rates are constant. In this way, we can formulate our DES as a continuous time Markov chain (CTMC) and our DTS as a discrete time Markov chain (DTMC). Thus, aside from the connections to numerical methods for SDEs and ODEs, we can also frame our problem as a way of constructing a DTMC from CTMC. However, our DTMC (representing our DTS) is only an approximation of the true DTMC achieved by sampling the CTMC at discrete time points. The true DTMC achieved by sampling is difficult to deal with since its transition matrix is the matrix exponential of the CTMC's rate matrix, whose size is already exponential in the number of nodes in the network.  Two other ways of constructing a DTMC from a CTMC are the embedded DTMC of a CTMC or the embedded DTMC of a uniformized CTMC  \cite{tijms2003first}.  However, these are not practical approximations. The embedded DTMC focuses on the sequence of states visited and loses all sense of ``when'' the transitions happen.  Also, the embedded DTMC has the same number of transitions as the CTMC.  The uniformized CTMC allows self-transitions in order to make all the sojourn times have the same expected duration. This leads it to have even more transitions in the same span of time as the original CTMC.  This defeats our goal of speeding up the simulation by reducing the number of transitions for a given time span.  
To our knowledge, this is the first rigorous analysis of the error of a DTS approximation of a DES \cite{buss2010comparison}.

In the next section we define our DES and DTS and then describe them from a Markov chain perspective. Then in section 3, we state the main results and give a sketch of the proofs.  The proof details are relegated to the appendix.  Section 4 illustrates these results numerically and section 5 analyzes the computational cost.  We conclude in section 6 with a discussion of future work.

\section{Model} 
In this section we describe the various parts of the model.  We start by introducing a DES and our network notation. Then we describe how our DTS ``batches'' events in our DES. We then define notation for the CTMC and DTMC stochastic processes, before precisely defining the SI and SIS contact processes. We end with a theorem stating an inequality between the DTMC and the CTMC for the SI process. Table 1 summarizes our notation.

\begin{table}
\centering
\begin{tabular}{p{6in}}
\hline
$n$, number of nodes\\
$E$, set of edges of the graph\\
$k$, max degree in the graph\\
$x$, generic state of the graph, a binary vector of length $n$\\
$x'\geq x$, the vector inequality is component-wise\\
$x_0$, initial state of the graph\\
$|\cdot|$, 1-norm of a vector\\
$|x|$, number of infected nodes when graph in state $x$\\
$S(x)$, set of susceptible nodes when graph in state $x$\\
$I(x)$, set of infected nodes when graph in state $x$\\
$n(j,x)$, number of (or set of edges from $j$ to) infected nodes neighboring $j$ when the graph is in state $x$ and $j$ is susceptible, and 0 (or $\emptyset$) otherwise\\
$1(\cdot)$, indicator function\\
$w \in \Omega$, scenario or sample point\\
DES, discrete event simulation, the CTMC\\
DTS, discrete time simulation, the DTMC\\
$\nbdist(r,p)$, a negative binomial random variable giving the number of successes before $r$ failures where the success probability is $p$.\\
$\Geometricdist(p)$, an geometric random variable on $\{0,1,2,\dotsc\}$ giving the number of failures before the first success where the success probability is $p$\\
$\expdist(\lambda)$, an exponential random variable with rate $\lambda$.\\
$Y_1 \leq  Y_2$ in distribution means $\Pr[Y_1\leq y] \geq  \Pr[Y_2\leq y]$.\\
$X(t)=X(t,w)$, true state (i.e., state of the CTMC/DES) at time $t$\\
$h$, length of each time step of the DTMC/DTS\\
$\tilde{X}_i(w) = X(ih,w)$, the CTMC sampled at time steps of length $h$\\
$(A_i)$, iid random vectors where $A_i$ contains the random numbers to simulate from time $(i-1)h$ to time $ih$\\
$(F_i)$, the natural filtration of $(A_i)$\\
$\tilde{g}(x,a)$, the transition function for $(\tilde{X}_i)$, $\tilde{X}_i=\tilde{g}(\tilde{X}_{i-1},A_i)$\\
$X_i(w)$, state of the DTMC/DTS approximation after $i$ steps, at time $ih$\\
$g(x,a)$, the transition function for $(X_i)$, $X_i=g(X_{i-1},A_i)$\\
$\epsilon_i=\epsilon_i(w)=X_i(w)-X(ih,w)$, global error\\
$f(x,a) = (g(x,a)-x)/h$, analogous to right hand side of an ODE\\
$D(x',x,a)=(x'-x)/h-f(x,a) = (x'-g(x,a))/h$, difference operator\\
$d_i=d_i(w)=D(X(ih,w),X((i-1)h,w),a)$, local error\\
$N(t)=N(t,w)=|X(t)|$, number of infected nodes in $X(t)$\\
$N_i=N_i(w)=|X_i|$, number of infected nodes in $X_i$\\
\hline
\end{tabular}
\caption{Notation and Acronyms}
\end{table}

\paragraph{Definition of a Discrete Event Simulation (DES)}
Discrete event simulations model an evolving system in continuous time as a sequence of events. Each event corresponds to a change in the system state. Events happen instantaneously and occur at separate time points. A set of timers are associated with each possible subsequent event. The time to next event is given by the smallest of the timers. When an event occurs, we say that the timer ``fires''. Immediately following an event, timers are updated and new possible events may be added. Conversely, in a discrete time simulation, time is divided into distinct intervals of length $h$. The status of the system changes instantaneously at the end of each interval. These updates approximate the cumulative changes that would have occurred during that interval in continuous time.

\paragraph{Graph Setup}
We consider a network (or graph) of $n$ nodes with set of edges $E$ and maximum degree $k$.  The state of the graph, $x\in\{0,1\}^n$, is an $n$-dimensional binary vector describing which nodes are infected; $x_i=1$ if node $i$ is infected.  For a graph in state $x$ and node $j$, we let $x+j$ denote the state where node $j$ is also infected.  We let $|x|$ be the 1-norm of the vector or equivalently the number of infected nodes.  We let $S(x)$ denote the set of susceptible nodes and $I(x)$ the set of infected nodes.  For graph states $x$ and $x'$, we let $x \leq  x'$ denote the component-wise inequality, that the infected nodes in $x'$ includes all those in $x$.  Depending on the context we let $n(j,x)$ be the set of edges between a susceptible node $j$ and its infected neighbors or the number of such neighbors (we define it to be 0 or $\emptyset$ if $j$ is not susceptible).

\paragraph{Batching}
Our approach to construct the DTS is to batch the events of the DES in a time step of length $h$.  Suppose at time $ih$ the network is in state $x$.  Now applying one step of the DTS is equivalent to selecting all the timers in the DES that are set to fire in time $h$ (i.e., before time $(i+1)h$) and executing them at once, essentially batching all the events that happen in that interval.  The updated state and set of timers is then ready for the next time step.  This is in contrast to the DES where we select only the next timer to fire and process the timers sequentially.

\paragraph{Definition of $X(t)$ and $X_i$}
Our DES and DTS of the infection processes on our graph can be formulated as a CTMC, $X(t,w)$, and a DTMC, $X_i(w)$, respectively. This is due to the Markov property that we assume for our SI and SIS processes. Here $w \in \Omega$ represents a scenario (a.k.a. sample point or state of the world).  Both processes start at the same initial state, $X(0)=X_0=x_0$. Our goal is to compare the CTMC $X(t)$ with the constructed DTMC $(X_i)$, so that we can compare the accuracy of our DTS with our DES.  The DTMC after $i$ steps, $X_i$, should approximate $X(ih)$. We will also define the DTMC $\tilde{X}_i(w)=X(ih,w)$ that denotes the CTMC sampled at the same time points.  Thus we will consider $X(t)$ and $(\tilde{X}_i)$ to be the true process and $(X_i)$ the DTMC approximation.

\paragraph{Definition of SI Process}
In the DES of the SI process, there are only transitions to states with one additional infected node, that is from a state $x$ we can only transition to states of the form $x+j$, where $j \in S(x)$. The transition rate from $x$ to $x+j$ is $n(j,x)$.  Nodes may not become uninfected: if node $j$ is infected in $X_i$ (or $\tilde{X}_i$), then $j$ is infected in $X_{i+1}$ (or (or $\tilde{X}_{i+1}$).  In the DTS, multiple nodes may become infected during a transition.  The probability of a node becoming infected in the DTS is as follows: for any susceptible node $j$ in $S(X_i)$, the probability of it being infected in $X_{i+1}$ is $1-\exp(-h n(j,X_i))$.  These probabilities are independent of each other.

\paragraph{Definition of SIS Process}
The SIS process differs from the SI process only in that it allows for recovery of infected nodes (and subsequent reinfection).  In addition to the transitions of the SI process, in the DES the graph may also transition from state $x+j$ to state $x$ when $j \in S(x)$.  The associated transition rate is $\mu$.  We construct the DTS is an analogous fashion as for the SI DTS. For any susceptible node $j$ in $S(X_i)$, the probability of it being infected in $X_{(i+1)}$ is again $1-\exp(-hn(j,X_i))$. For any infected node the probability of it being recovered in $X_{(i+1)}$ is $1-\exp(-\mu h)$. We also assumes, like in the SI DTS, that the transitions across nodes are independent of each other. That is, only primary infections may occur and any recovered node cannot become reinfected in the same time step. 

\paragraph{Filtration and DTMC}
It will be convenient to explicitly construct a filtration.  We let $(A_i)$ be a sequence of iid random vectors and let $(F_i)$ be the natural filtration of $(A_i)$.  We will assume that $\tilde{X}_i$ is adapted to $F_i$, that is $A_1,\dotsc,A_i$ contain all the information (i.e., all the random numbers) to simulate the DES until time $ih$.  For example we could let $A_i$ be a vector of iid exponential random variables, with the dimension of the vector large enough so that we have enough random numbers for any step of the simulation.  Specifically, $A_i$ would contain the random numbers for all the event timers in the DES from time $(i-1)h$ to time $ih$.  Thus we can denote the actions of the DES over a time interval of length $h$ by the function $x'=\tilde{g}(x,a)$ moving the graph state from $x$ to $x'$ with the random numbers in $a$, allowing us to define the $(\tilde{X}_i)$ recursively, $\tilde{X}_i=\tilde{g}(\tilde{X}_{i-1},A_i)$ for all $i\geq 1$.  Our goal is then to construct an easy to simulate DTMC $(X_i)$ adapted to the same source of random numbers, $(F_i)$.  Specifically, we seek to construct a simple function $x'=g(x,a)$, defining the approximate DTMC recursively, $X_i=g(X_{i-1},A_i)$ for all $i\geq 1$.

As an illustration, we now explicitly construct the filtration for the SI process.  We let each $A_i$ be a vector of $|E|$ iid $\expdist(1)$ random variables, one for each edge in the network, with $A_i(e)$ denoting the component corresponding to edge $e$.  If the graph is in state $x$ between time $(i-1)h$ and $ih$, then we have an active timer with time $A_i(e)$ for each edge $e$ between a susceptible and an infected node. This gives the above rate of infection, $n(j,x)$, for any susceptible node $j$.  In the DES, when a node $j$ becomes infected in the time interval $[(i-1)h,ih)$, we update the set of active timers using the components of $A_i$: we deactivate those active timers for edges between $j$ and another infected node and activate those for edges between $j$ and a susceptible node.  In the DTS, we do the same except that the timers are only updated at the end of each time step.

\begin{thm}[Theorem 1A]
For the SI process, $X(ih) \geq  X_i$ a.s.
\end{thm}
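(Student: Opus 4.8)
The plan is to prove this by induction on $i$, coupling the two processes through the per-edge exponential random variables $A_i(e)$ constructed above for the SI process. The base case $i=0$ is immediate from $X(0)=X_0=x_0$. For the inductive step I would assume $X((i-1)h)\geq X_{i-1}$ a.s., fix a scenario in this a.s.\ event, and argue deterministically; since the vector order is component-wise, it suffices to show that every node $j$ with $j\in I(X_i)$ also lies in $I(X(ih))$.

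I would split into cases. If $j\in I(X_{i-1})$, then $j\in I(X((i-1)h))$ by the inductive hypothesis, and since no node ever recovers in the SI process, $j\in I(X(ih))$. The substantive case is $j\in S(X_{i-1})$ with $j\in I(X_i)$. By the SI DTS rule and the explicit filtration (timers are refreshed only at the end of a step), $j$ is infected during step $i$ exactly when $\min_{e\in n(j,X_{i-1})}A_i(e)\leq h$, so I can pick an edge $e^{*}=(j,j')\in n(j,X_{i-1})$ with $A_i(e^{*})\leq h$ and with $j'\in I(X_{i-1})$. If $j$ already lies in $I(X((i-1)h))$ we are done as above; otherwise $j\in S(X((i-1)h))$, and the inductive hypothesis gives $j'\in I(X((i-1)h))$, so in the DES the edge $e^{*}$ carries an active timer of value $A_i(e^{*})$ at time $(i-1)h$. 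Since $j'$ stays infected and $j$ stays susceptible until it is itself infected, $e^{*}$ remains active until $j$ becomes infected, so $j$ is infected in the DES no later than time $(i-1)h+A_i(e^{*})\leq ih$; hence $j\in I(X(ih))$, closing the induction.

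The step I expect to require the most care is the coupling bookkeeping inside a single interval: I need to be sure that the timer governing $e^{*}$ in the DES on $[(i-1)h,ih)$ really is the variable $A_i(e^{*})$ I read off from the DTS transition. This rests on the memorylessness built into the construction (a timer that failed to fire in an earlier interval is restarted with a fresh $\expdist(1)$ draw, so reusing the label $A_i(e^{*})$ for interval $i$ is legitimate regardless of history) and on the fact that in the SI process $e^{*}$ is never deactivated and then reactivated within interval $i$ --- once $j$ is infected both its endpoints stay infected forever --- so $A_i(e^{*})$ is consumed at most once. It is worth noting that the argument is deliberately one-sided: the DES may infect $j$ earlier via a different edge, or infect a neighbour of $j$ mid-interval which then infects $j$, but any such extra activity only reinforces $X(ih)\geq X_i$ rather than threatening it, so I never have to argue that the DES fails to infect a node. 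A careless reading of the timer semantics is essentially the only thing that could derail an otherwise routine induction.
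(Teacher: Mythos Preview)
Your proof is correct and rests on the same coupling-by-induction idea as the paper's. The paper organizes the inductive step a little differently: instead of arguing node by node, it reduces to showing $\tilde g(x',a)\geq g(x,a)$ whenever $x'\geq x$, and obtains this via the chain $\tilde g(x',a)\geq g(x',a)\geq g(x,a)$, i.e.\ (i) from any fixed starting state the DES dominates the DTS over one step (``all the timers which fire in the DTS in time $h$ also fire in the DES''), and (ii) $g$ is monotone in its state argument. Your direct argument merges these two observations into a single case analysis, and your discussion of the timer bookkeeping (memoryless refreshing of $A_i(e)$ at interval boundaries, the edge $e^{*}$ never being deactivated before $j$ is infected) makes explicit exactly what the paper's one-line justification is relying on. Neither route buys anything the other does not: the paper's factorization is slightly cleaner to state, while yours is more self-contained about the coupling mechanics.
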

\begin{proof}See Appendix.\end{proof}

\section{Results and Proof Sketch} 

Our goal is to prove bounds on the global error, that is a strong convergence result.  The proofs are given later in this section.

\begin{thm}[Theorem 2A]
For the SI process, $\E[|\epsilon_i|] \leq  C_{SI} K_{SI} h$ when $h\leq 1$
where $C_{SI} = nk^2 e^{(k-2)}$, $K_{SI}=(1/k)(e^{kT}-1)$ and $T = ih$.
\end{thm}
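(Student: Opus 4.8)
\emph{Overview.} The plan is the classical convergence argument for one-step methods: express the global error recursively in terms of the previous global error plus a local error (a stability/consistency split), bound each piece, and sum via a discrete Gr\"onwall inequality. Since $X_i=g(X_{i-1},A_i)$ while the definition of the difference operator gives $X(ih)=g(X((i-1)h),A_i)+h\,d_i$, subtraction yields
\[
\epsilon_i = \bigl(g(X_{i-1},A_i)-g(X((i-1)h),A_i)\bigr) - h\,d_i ,
\]
hence $|\epsilon_i|\le\bigl|g(X_{i-1},A_i)-g(X((i-1)h),A_i)\bigr|+h\,|d_i|$. It remains to bound the first term (stability of $g$) and the local error $|d_i|$ (consistency).

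\emph{Stability.} I would prove the monotone one-step Lipschitz estimate: for graph states $x\le x'$, $\E\bigl[\,|g(x',A)-g(x,A)|\,\bigr]\le(1+kh)\,|x'-x|$. Run both transitions with the \emph{same} exponential clocks $A(e)$, as in the SI filtration described above. Under this coupling $g$ is monotone in its first argument (a susceptible node infected from $x$ is infected from $x'$ too, since $n(j,x)\subseteq n(j,x')$), so $g(x,A)\le g(x',A)$ and the left-hand side equals $\E\bigl[\,|g(x',A)|-|g(x,A)|\,\bigr]$. The discrepancy splits into the $|x'-x|$ nodes of $I(x')\setminus I(x)$, each contributing at most $1$, and the susceptible nodes infected from $x'$ but not from $x$, which requires one of the at most $k|x'-x|$ edges joining $S(x)$ to $I(x')\setminus I(x)$ to fire within time $h$ and hence contributes at most $kh\,|x'-x|$ in expectation (using $1-e^{-\lambda}\le\lambda$). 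Applying this conditionally on $F_{i-1}$ (using that $A_i$ is independent of $F_{i-1}$ and that $X_{i-1}\le X((i-1)h)$ a.s.\ by Theorem 1A) gives $\E\bigl[\,|g(X_{i-1},A_i)-g(X((i-1)h),A_i)|\mid F_{i-1}\bigr]\le(1+kh)\,|\epsilon_{i-1}|$.

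\emph{Local error.} I would show $\E[\,|d_i|\,]\le C_{SI}h$ for $h\le1$. Theorem 1A applied over a single step started from state $X((i-1)h)$ (by time-homogeneity) gives $X(ih)\ge g(X((i-1)h),A_i)$ a.s., so $h\,d_i\ge0$ componentwise and $h\,|d_i|=|X(ih)|-|g(X((i-1)h),A_i)|$ is exactly the number of nodes the CTMC infects during $[(i-1)h,ih]$ that are not reached by some edge firing directly from the infected set within time $h$. Each such ``missed'' node is infected in the CTMC along a firing chain of length $\ell\ge2$ emanating from the infected set; such a chain uses $\ell$ independent unit-rate clocks that must each fire within the step (hence be $\le h$), so it is realized with probability at most $h^\ell/\ell!$, and there are at most $nk^\ell$ chains of length $\ell$. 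Summing over $\ell\ge2$ and using $\sum_{\ell\ge0}(kh)^\ell/\ell!=e^{kh}\le e^k$ for $h\le1$ bounds the expected number of missed nodes by $h^2$ times a constant polynomial in $n,k$ times $e^{O(k)}$; a careful accounting (the two end edges of the chain forcing the leading $nk^2h^2$, the middle edges supplying only a bounded exponential-in-$k$ factor) produces the stated $C_{SI}=nk^2e^{(k-2)}$.

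\emph{Summation and main obstacle.} Taking expectations in the recursion and inserting the two bounds gives $\E[\,|\epsilon_i|\,]\le(1+kh)\E[\,|\epsilon_{i-1}|\,]+C_{SI}h^2$ with $\epsilon_0=0$; unrolling and using $(1+kh)^i\le e^{kih}=e^{kT}$,
\[
\E[\,|\epsilon_i|\,]\le C_{SI}h^2\sum_{m=0}^{i-1}(1+kh)^m=\frac{C_{SI}h}{k}\bigl((1+kh)^i-1\bigr)\le\frac{C_{SI}h}{k}\bigl(e^{kT}-1\bigr)=C_{SI}K_{SI}h .
\]
The main obstacle is the local-error step: it is easy to see on general grounds that a missed infection is a second-order event and hence $\E[\,|d_i|\,]=O(h)$, but pinning down the constant $nk^2e^{(k-2)}$ requires enumerating the infection chains that one-step batching can miss, bounding their number via the maximum degree $k$, and summing the resulting series so that longer chains only inflate the leading term by a bounded factor. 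The stability step is the other delicate point, since it relies on the shared-clock coupling together with the monotonicity of Theorem 1A.
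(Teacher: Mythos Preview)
Your overall architecture---split the global error into a one-step stability term plus a local truncation term and close with a discrete Gr\"onwall sum---is exactly the paper's strategy (its Lemma~5 plus Lemma~3A feeding into Theorem~2), and your final summation is line-for-line the paper's. Your stability step is also essentially the paper's Lemma~4A, though you reach it more directly: by invoking Theorem~1A you only need the monotone case $x\le x'$, which lets you avoid the paper's detour through the general-pair reduction (Lemma~S7) and the edge-counting Lemma~S8.

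Where you genuinely diverge is the local-error bound. The paper does \emph{not} count infection chains; instead it dominates the CTMC by an SI process on an infinite $(k{-}1)$-ary tree, recognizes the infection count there as a Yule process, and reads off a negative-binomial law (Lemmas~S4--S6) whose mean gives $\E[N(h)-N_1]\le (\E[N_1]-N(0))\,k\,e^{k-2}$. Your union bound over firing chains is a legitimate and more elementary alternative: a missed node sits at the end of a length-$\ell\ge2$ path whose edge clocks must sum to at most $h$, an event of probability $\le h^\ell/\ell!$, and there are at most $nk(k-1)^{\ell-1}$ such paths. Summing gives $\E[h|d_1|]\le \tfrac{nk}{k-1}(e^{(k-1)h}-1-(k-1)h)\le \tfrac{nk(k-1)}{2}e^{k-1}h^2$, hence $\E[|d_1|]\le \tfrac{nk(k-1)}{2}e^{k-1}\,h$. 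This is the right order and the right shape, but note it does not literally reproduce the paper's constant $C_{SI}=nk^2e^{k-2}$; your claim that ``a careful accounting'' recovers that exact constant is optimistic---the chain argument naturally produces $e^{k-1}$ rather than $e^{k-2}$ because it does not exploit the fact that the hazard rate increases by only $k-2$ per infection (one incoming edge is consumed), which is precisely what the Yule-process comparison captures. The tree/negative-binomial route buys the sharper constant; your route buys simplicity and avoids the auxiliary Lemmas~S4--S6 entirely.

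One small wording issue: the chain does not merely require each clock to be $\le h$; it requires their \emph{sum} to be $\le h$ (a Gamma$(\ell,1)$ tail), which is what actually yields the $1/\ell!$ you use.
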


\begin{thm}[Theorem 2B]
For the SIS process, $\E[|\epsilon_i|] \leq  C_{SIS} K_{SIS} h$ when $h\leq 1$
where $C_{SIS} = nk(k\exp(k-2)+\mu)$, $K_{SIS}=(1/(k+\mu))(e^{(k+\mu)T}-1)$ and $T = ih$.
\end{thm}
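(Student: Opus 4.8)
The plan is to adapt the argument for Theorem 2A, which in turn follows the standard convergence proof for one-step methods for ODEs: split the global error into a propagated old error plus a fresh local error, bound each, and close with a discrete Gr\"onwall inequality. From the shape of the claimed bound I expect the propagation (``Lipschitz'') rate to be $L:=k+\mu$ and the per-step conditional local error to be at most $C_{SIS}h^2$. The starting point is the one-step identity $\tilde{X}_i = g(\tilde{X}_{i-1},A_i) + h\,d_i$ (just unwinding the definitions of $D$ and $d_i$), which gives $\epsilon_i = \bigl(g(X_{i-1},A_i) - g(X((i-1)h),A_i)\bigr) - h\,d_i$. Since $X_{i-1}$ and $X((i-1)h)$ are $F_{i-1}$-measurable while $A_i$ is independent of $F_{i-1}$, conditioning on $F_{i-1}$ reduces the problem to (a) a one-step stability bound for $g$ and (b) a bound on $\E[\,|d_i|\mid F_{i-1}\,]$.

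For stability I would show $\E[\,|g(x,A)-g(x',A)|\,] \le (1+(k+\mu)h)\,|x-x'|$ for all graph states $x,x'$, by coupling the two DTS updates through common per-edge and per-node random numbers and tracking how a single coordinate of disagreement spreads in one batched step: the disagreeing node may itself flip differently (a recovery event, contributing $\le \mu h$ via $1-e^{-\mu h}\le \mu h$), and it perturbs the infection hazard of each of its $\le k$ susceptible neighbors, each contributing $\le h$ since hazards differ by at most $1$ per disagreeing edge and $1-e^{-t}\le t$; independence of the node updates assembles these into the factor $1+kh+\mu h$. Note that, unlike the SI case of Theorem 1A, $x$ and $x'$ need not be ordered, so the coupling must be through common draws rather than monotonicity.

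For the local error I would bound $\E[\,|d_i|\mid F_{i-1}\,] \le C_{SIS}h$, equivalently $\E[\,|X(ih)-g(X((i-1)h),A_i)|\mid F_{i-1}\,]\le C_{SIS}h^2$. Conditioned on $X((i-1)h)=x$, the batched update and the true continuous-time evolution over $[(i-1)h,ih)$ agree unless a second-order event occurs in that window, i.e. two transitions chain together: a node is infected and then infects another; a node recovers and is then reinfected; a node is infected and then recovers and thereby changes a neighbor's hazard; and so on. Each such chain has probability $O(h^2)$; there are $O(nk^2)$ infection-driven ones and $O(nk)$ recovery-driven ones, and summing reproduces the SI contribution $nk^2e^{k-2}h^2$ of Theorem 2A (the $e^{k-2}$ arising when bounding a sum over how many of a node's neighbors are simultaneously involved, which is where $h\le 1$ is used) plus the new term $nk\mu h^2$, yielding $C_{SIS}=nk(ke^{k-2}+\mu)$. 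Much of the infection bookkeeping should transfer verbatim from the proof of Theorem 2A; only the recovery interactions are genuinely new.

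Combining the two bounds gives $\E[|\epsilon_i|]\le (1+Lh)\,\E[|\epsilon_{i-1}|]+C_{SIS}h^2$ with $L=k+\mu$, and since $\epsilon_0=0$ this unrolls to $\E[|\epsilon_i|]\le C_{SIS}h^2\sum_{j=0}^{i-1}(1+Lh)^j = C_{SIS}h\,\frac{(1+Lh)^i-1}{L}\le C_{SIS}h\,\frac{e^{LT}-1}{L}=C_{SIS}K_{SIS}h$, using $T=ih$ and $1+s\le e^s$. I expect the main obstacle to be the local-error step in the SIS setting: without the monotone coupling available for SI one must enumerate every way a length-$h$ batch can diverge from the true process, now including the recovery-then-reinfection and infection-then-recovery chains that have no SI analogue, and bound each family carefully enough that the constants assemble into exactly $nk(ke^{k-2}+\mu)$ --- pinning down the $e^{k-2}$ factor and seeing precisely where $h\le 1$ enters is the delicate part.
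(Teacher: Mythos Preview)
Your plan is correct and matches the paper's approach almost exactly: the paper packages your three ingredients as Lemma~4B (the Lipschitz bound $\E[|f(x,A_1)-f(z,A_1)|]\le(k+\mu)|x-z|$, equivalent via $g=x+hf$ to your stability bound on $g$), Lemma~3B (the local error bound $\E[|d_i|]\le C_{SIS}h$), and Lemma~5 (the 0-stability/Gr\"onwall recursion, yielding exactly your inequality $\E[|\epsilon_i|]\le(1+Lh)\E[|\epsilon_{i-1}|]+C_{SIS}h^2$ after unwinding), so that the proof of Theorem~2B itself is two lines.

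The one substantive difference is in how the local error is bounded, and it resolves precisely the point you flagged as delicate. Rather than enumerating second-order event chains, the paper first observes that the secondary infections in the SIS process over $[0,h)$ are dominated by those in the SI process started from the same set $Y$ of directly infected nodes (recoveries can only remove infections), and then stochastically dominates the SI secondary infections by a Yule process on an infinite $(k{-}1)$-ary tree rooted at the nodes in $Y$ (Lemmas~S4--S6). This gives a negative-binomial upper bound with mean $\E[|Y|]\,\frac{k}{k-2}(e^{(k-2)h}-1)$, and the elementary inequality $e^{(k-2)h}-1\le(k-2)h\,e^{k-2}$ for $h\le 1$ (Lemma~S2) produces the factor $k\,e^{k-2}$ directly; adding the recovery contribution $\E[|Y|](1-e^{-\mu h})\le\E[|Y|]\mu h$ and using $\E[|Y|]\le nkh$ assembles $C_{SIS}=nk(ke^{k-2}+\mu)$. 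This stochastic-domination route is cleaner than the direct event enumeration you sketch and pinpoints exactly where $e^{k-2}$ and the hypothesis $h\le 1$ enter.
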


\paragraph{ODE Analogy}
We prove a rate of strong convergence using an analogy to ODEs: we treat the CTMC as an ODE of the form $\dot{x}=f(x)$ and treat the DTMC as Euler's method for this ODE.  Thus we define the right hand side function representing the incremental rate of change $f(x,a) = (g(x,a)-x)/h$, and we define the difference operator $D(x',x,a) = (x'-x)/h-f(x,a) = (x'-g(x,a))/h$.  The difference operator is designed so that for our approximation (i.e., Euler's method for ODEs and the DTMC in our case), $D(g(x,a),x,a)=0$ and thus the DTMC satisfies $D(X_i,X_{i-1},A_i)=0$ a.s. for all $i$.  While our argument holds with variable steps as in \cite{ascher1998computer}, we assume for simplicity that all time steps are of size $h$.  We define the local (i.e., 1-step) error as $d_i=D(X(ih),X((i-1)h),A_i)$ and the global (i.e., cumulative) error as $\epsilon_i=X_i-X(ih)$.  We first prove bounds on the local error.

\begin{lemma}[Lemma 3A]
For the SI process, $\E[|d_i|] \leq  C_{SI} h$ where $C_{SI}=n k^2 \exp(k-2)$ for $h\leq 1$.
\end{lemma}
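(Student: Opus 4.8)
The plan is to work one step at a time, exploiting the ODE/Euler analogy: fixing $x := X((i-1)h)$, the local error $d_i$ compares one DES step, $\tilde g(x,A_i)=X(ih)$, with one DTS step, $g(x,A_i)$, started from the common state $x$. Applying Theorem~1A with $i=1$ and initial state $x$ (and using that $A_i$ has the same law as $A_1$) gives $g(x,A_i)\le X(ih)$ componentwise, so $d_i\ge 0$ and
\[
|d_i| \;=\; \tfrac1h\bigl(|X(ih)|-|g(x,A_i)|\bigr)\;=\;\tfrac1h\bigl(|B|-|B'|\bigr),
\]
where $B$ is the set of nodes newly infected by the DES during the step and $B'\subseteq B$ the set newly infected by the DTS. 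By the definition of the SI DTS, $\E[\,|B'|\mid x\,]=\sum_{v\in S(x)}(1-e^{-h\,n(v,x)})$, so it suffices to prove
\[
\E[\,|B|\mid x\,]-\sum_{v\in S(x)}(1-e^{-h\,n(v,x)})\;\le\; nk^2 e^{k-2} h^2 .
\]

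The first main task is to bound $\E[\,|B|\mid x\,]$, the expected number of infections the DES produces in a window of length $h$. Let $\lambda(y):=\sum_{v\in S(y)}n(v,y)$ denote the total infection rate in state $y$ — equivalently, the number of edges with exactly one infected endpoint. Since $N(t)-\int_0^t\lambda(X(s))\,ds$ is a martingale, $\E[\,|B|\mid x\,]=\E\!\bigl[\int_0^h\lambda(X(s))\,ds \mid x\bigr]$. The key observation is that a single infection changes $\lambda$ by at most $k-2$: when a node $v$ with $n(v,x)\ge 1$ infected neighbours is infected, its $n(v,x)$ boundary edges become internal while its remaining $\deg(v)-n(v,x)\le k-1$ incident edges become boundary edges, a net change of $\deg(v)-2n(v,x)\le k-2$. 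Hence the generator satisfies $(\mathcal A\lambda)(y)\le(k-2)\lambda(y)$, and Dynkin's formula together with Gr\"onwall's inequality give $\E[\lambda(X(t))\mid x]\le\lambda(x)e^{(k-2)t}$; integrating over $[0,h]$ yields $\E[\,|B|\mid x\,]\le\lambda(x)\bigl(e^{(k-2)h}-1\bigr)/(k-2)$ (interpreted as $\lambda(x)h$ when $k=2$, and smaller when $k<2$).

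For the DTS side, the elementary inequality $1-e^{-y}\ge y-y^2/2$ gives $\E[\,|B'|\mid x\,]\ge h\lambda(x)-\tfrac{h^2}{2}\sum_{v\in S(x)}n(v,x)^2\ge h\lambda(x)-\tfrac{k}{2}h^2\lambda(x)$, using $n(v,x)\le k$. Subtracting the two estimates,
\[
\E[\,|B|\mid x\,]-\E[\,|B'|\mid x\,]\;\le\;\lambda(x)\left(\frac{e^{(k-2)h}-1}{k-2}-h+\frac{k}{2}h^2\right).
\]
For $k\ge2$ and $h\le 1$ one has $\frac{e^{(k-2)h}-1}{k-2}-h=\frac{1}{k-2}\bigl(e^{(k-2)h}-1-(k-2)h\bigr)\le\tfrac12(k-2)e^{k-2}h^2$, so the parenthesis is at most $\tfrac12\bigl((k-2)e^{k-2}+k\bigr)h^2\le k e^{k-2}h^2$; the cases $k\le1$ are trivial since then no cascades can occur and $B=B'$. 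Finally $\lambda(x)=\sum_{v\in S(x)}n(v,x)\le nk$, giving $\E[\,|B|\mid x\,]-\E[\,|B'|\mid x\,]\le nk^2 e^{k-2}h^2$ and hence $\E[|d_i|\mid x]\le nk^2 e^{k-2}h$; taking expectations over $x=X((i-1)h)$ finishes the argument.

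The step I expect to be the main obstacle is controlling $\E[\,|B|\mid x\,]$: a single DES step can trigger a cascade of infections, so one must quantify how fast the infection rate can grow inside the step, and the whole bound rests on the ``$\le k-2$ per infection'' estimate (the source of the $e^{k-2}$ factor) and on the validity of swapping expectation with the time integral via Dynkin's formula — routine here, since the chain has finite state space and bounded rates, but the heart of the proof. A more elementary alternative is to bound, for each node $v$, the probability that $v\in B\setminus B'$ by a union bound over all infection chains from $I(x)$ to $v$ of length at least two, each contributing a Gamma-tail of order $h^{\text{(length)}}$; this dispenses with the martingale machinery at the cost of a looser constant. Either way, the componentwise domination of Theorem~1A is essential: it is what reduces the vector error $|d_i|$ to a difference of scalar infection counts.
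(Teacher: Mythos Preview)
Your proof is correct and arrives at exactly the same constant $C_{SI}=nk^{2}e^{k-2}$, but the route differs from the paper's. The paper decomposes $N(h)-N_1$ as ``secondary infections caused by the $N_1-N(0)$ primary infections'' and bounds this in distribution by a negative binomial via a coupling with a Yule process on an infinite $(k-1)$-ary tree (Lemmas S4--S6); taking expectations then yields $\E[N(h)-N_1]\le(\E[N_1]-N(0))\cdot k e^{k-2}h$, and the same $\E[N_1]-N(0)\le nkh$ finishes it. You instead bound $\E[|B|]$ and $\E[|B'|]$ separately against the initial boundary rate $\lambda(x)$, using Dynkin plus Gr\"onwall for the former and a Taylor lower bound for the latter. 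Both arguments rest on the same structural fact (each infection raises the boundary rate by at most $k-2$), so the $e^{k-2}$ factor is inevitable either way. Your approach is more analytic and self-contained---no auxiliary tree process or negative-binomial identities---while the paper's distributional coupling is stronger than what is needed here but pays off later: Lemma~3B for the SIS process reuses the very same $\nbdist$ bound verbatim, whereas extending your Dynkin/Gr\"onwall argument to SIS would require re-doing the generator estimate with recoveries.
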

\begin{proof}See Appendix.\end{proof}

\begin{lemma}[Lemma 3B]
For the SIS process, $\E[|d_i|] \leq  C_{SIS} h$ when $h\leq 1$
where $C_{SIS} = nk(k\exp(k-2)+\mu)$.
\end{lemma}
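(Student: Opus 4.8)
The plan is to condition on the state $x=X((i-1)h)$ at the start of the $i$th step and to bound, node by node, the probability that the continuous-time run over $[(i-1)h,ih]$ and the one-step batched update $g(x,A_i)$ --- both driven by the same random numbers $A_i$, which we may take to be an $\expdist(1)$ infection clock for each edge together with an independent $\expdist(\mu)$ recovery clock for each node --- end the step in different infection states. Since $X(ih)$ and $g(x,A_i)$ are $\{0,1\}^n$-vectors, $|d_i|\,h=|X(ih)-g(x,A_i)|$ is exactly the number of nodes on which the two simulations disagree, so it suffices to prove $\sum_{j=1}^{n}\Pr\!\big[\,j\text{ disagrees}\mid X((i-1)h)=x\,\big]\le C_{SIS}h^{2}$ for every $x$ and then take the expectation over $x=X((i-1)h)$.

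Next I would classify, for a single node $j$, how a disagreement can arise. If $j\in I(x)$, the DTS declares $j$ recovered precisely when its recovery clock fires before $h$, and the DES makes the same first change to $j$; a disagreement therefore forces $j$ first to recover (probability $\le 1-e^{-\mu h}\le\mu h$) and then to be reinfected before the step ends, which requires some infection clock on an edge incident to $j$ to fire. By independence of the recovery clock from the edge clocks, this case has probability $O(\mu k h^{2})$. If $j\in S(x)$, the edges of $n(j,x)$ are active from the start of the step in both simulations, so a primary infection of $j$ realized by the DTS is also realized by the DES; hence the event ``DTS says $j$ infected, DES says $j$ susceptible'' forces ``$j$ primary-infected, then recovered'', again of probability $O(\mu k h^{2})$, while the complementary event ``DTS says $j$ susceptible, DES says $j$ infected'' forces $j$ to be reached by a chain of secondary infections none of whose clocks lies in $n(j,x)$.

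For the secondary-infection contribution I would reuse Lemma~3A rather than re-derive it. A short monotone coupling in the spirit of Theorem~1A shows that, under the shared clocks, every node ever infected during the step in the SIS DES is infected at the end of the step in the SI DES started from the same $x$; moreover the DTS-primary-infection event for $j$ is the same event ($\min_{e\in n(j,x)}A_i(e)<h$) in the SIS and SI settings. Hence the susceptible-at-start nodes that the SIS DES reaches by secondary infection form a subset of the SI secondary infections, whose expected number is at most $C_{SI}h^{2}=nk^{2}e^{k-2}h^{2}$ by the one-step bound of Lemma~3A (which holds starting from an arbitrary state). Summing the $O(\mu k h^{2})$ recovery terms over the $n$ nodes, organised by edge type so that the total is at most $nk\mu h^{2}$, and adding $nk^{2}e^{k-2}h^{2}$, gives $\sum_j\Pr[\,j\text{ disagrees}\mid x\,]\le nk(ke^{k-2}+\mu)h^{2}=C_{SIS}h^{2}$; dividing by $h$ and averaging over $x$ yields $\E[|d_i|]\le C_{SIS}h$. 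The hypothesis $h\le1$ enters both through the elementary inequalities $1-e^{-t}\le t$ applied with $t\le kh$ or $t\le\mu h$ and through the combinatorial estimate inherited from Lemma~3A.

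The genuinely new work, and the step I expect to be the main obstacle, is the recovery bookkeeping rather than the secondary-infection estimate: one must check that ``recover then reinfect'' and ``primary-infect then recover'' are the only recovery-induced disagreements, that cascades of repeated recovery and reinfection of the same node within a single step contribute only higher-order terms, that the independence of the $\expdist(\mu)$ recovery clocks from the infection clocks is used so that each such event is truly $O(h^{2})$, and that summing over $I(x)$, $S(x)$ and the relevant edges produces exactly the constant $nk(ke^{k-2}+\mu)$ and not a larger multiple of it. Setting up the SIS-versus-SI monotone coupling cleanly --- and, with it, fixing the convention for when an edge's clock fires once that edge becomes active --- is the other somewhat delicate point, though it is of the same nature as what is already required for Theorem~1A and Lemma~3A.
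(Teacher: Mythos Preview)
Your plan coincides with the paper's in all essentials: reduce to a one-step bound from an arbitrary start state, dominate the secondary-infection contribution by the SI process via a monotone coupling (the paper re-derives this through Lemma~S6 rather than citing Lemma~3A, but the content is the same), and charge the remaining discrepancy to recovery events at rate~$\mu$, combined with the bound $\E[\text{primary infections}]\le nkh$ from \eqref{eq:N1N0}. The one substantive difference is organisational and bears directly on the constant you flag as a concern. The paper does not argue node-by-node but instead centers on the set $Y:=g_{SI}(x_0,A_1)-x_0$ of primary infections: its recovery term is \emph{only} the recoveries among $Y$, giving exactly $\E[|Y|](1-e^{-\mu h})\le nk\,\mu h^{2}$, while \emph{all} other discrepancies---including the reinfections of originally infected nodes that you handle separately---are folded into the ``additional infections'' dominated by $\E[\nbdist(|Y|r,p)]\le \E[|Y|]\,kh\,e^{k-2}\le nk^{2}e^{k-2}h^{2}$. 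Your node-by-node split instead charges recover-then-reinfect for $j\in I(x)$ and primary-infect-then-recover for $j\in S(x)$ as two separate $\mu$-type contributions; summing both gives up to $2nk\mu h^{2}$, which is precisely the overshoot you anticipate. The paper's bookkeeping avoids this by treating reinfections as infection events and absorbing them into the SI-dominated term rather than into the $\mu$ term; with that reallocation your argument and the paper's become identical.
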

\begin{proof}See Appendix.\end{proof}

Note the dependence of the constants $C_{SI}$ and $C_{SIS}$ on $n$.  While this in undesirable it cannot be avoided since we accumulate error for each infected node.
Using the Lipschitz property of $f$ we prove 0-stability of Euler's method which then gives us the desired strong convergence.

\begin{lemma}[Lemma 4A]\label{lem4A}
For the SI process, $\E[|f(x,A_1)-f(z,A_1)|] \leq  L_{SI} |x-z|$ for all $x$ and $z$, where $L_{SI}=k$.
\end{lemma}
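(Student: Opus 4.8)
The plan is to exploit the explicit edge-timer construction of the filtration for the SI process, together with a telescoping reduction to the case in which $x$ and $z$ differ in a single coordinate. Since the claimed inequality is symmetric in $x$ and $z$ and its right-hand side is additive along single coordinate flips, I would first reduce to showing $\E[|f(x,A_1)-f(x+m,A_1)|]\le k$ for every state $x$ and every $m\in S(x)$: any pair $x,z$ is connected by a path in $\{0,1\}^n$ of length $|x-z|$ that flips one differing coordinate at a time, each flip being (after possibly swapping the roles of $x$ and $z$) of the form $y\mapsto y+m$, so the general bound follows from the triangle inequality applied pointwise before taking expectations.

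For $z=x+m$ I would then expand $f(x,A_1)-f(x+m,A_1)$ coordinate by coordinate, using $f(y,a)_j=(g(y,a)_j-y_j)/h$ and the fact that, in the edge-timer construction, a node $j\in S(y)$ becomes infected in one step from $y$ precisely when $\min_{e\in n(j,y)}A_1(e)\le h$. The only coordinates on which the two vectors can differ are coordinate $m$ (susceptible in $x$, infected in $x+m$) and the coordinates of the susceptible-in-$x$ neighbors $j$ of $m$ (for which $n(j,x+m)=n(j,x)\cup\{(j,m)\}$); for every other node $j\ne m$ the infected-neighbor set is unchanged or $j$ is already infected, so the two $f$-values coincide. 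For coordinate $m$, since the minimum of $|n(m,x)|$ iid $\expdist(1)$ timers is $\expdist(|n(m,x)|)$, one gets $\E[|f(x,A_1)_m-f(x+m,A_1)_m|]=(1-e^{-h|n(m,x)|})/h\le |n(m,x)|$. For a susceptible neighbor $j$ of $m$, the $f$-coordinates differ only on the event that the new timer $A_1((j,m))\le h$ while all of $j$'s previously active timers exceed $h$; these events are independent, so $\E[|f(x,A_1)_j-f(x+m,A_1)_j|]=e^{-h|n(j,x)|}(1-e^{-h})/h\le (1-e^{-h})/h\le 1$.

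Summing, $\E[|f(x,A_1)-f(x+m,A_1)|]$ is at most $|n(m,x)|$ plus the number of susceptible-in-$x$ neighbors of $m$. The crucial observation is that, because $m$ is susceptible in $x$, the quantity $|n(m,x)|$ is exactly the number of infected neighbors of $m$, so the number of susceptible neighbors of $m$ equals $\deg(m)-|n(m,x)|$; the two contributions therefore add to $\deg(m)\le k$, giving the constant $L_{SI}=k$.

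I expect this last accounting to be the main obstacle: a careless term-by-term estimate that bounds the contribution of the flipped node and the contribution of its neighbors each by $k$ only yields $2k$, and obtaining the sharp constant requires noticing that these contributions are charged to disjoint subsets of the edges incident to $m$ (together numbering $\deg(m)$) and that the $1/h$ factors in the definition of $f$ are exactly absorbed by the elementary inequalities $1-e^{-h|n(m,x)|}\le h|n(m,x)|$ and $1-e^{-h}\le h$. A secondary point to check carefully is that the edge-timer coupling genuinely forces all the non-contributing coordinates to agree, i.e., that using one common vector $A_1$ for both states introduces no spurious dependence.
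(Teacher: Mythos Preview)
Your argument is correct, and the final accounting---that the contribution of the flipped node $m$ and the contributions of its susceptible neighbors are charged to complementary subsets of the edges incident to $m$---is exactly the right observation to recover the sharp constant $k$.

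The paper proceeds differently. Rather than telescoping down to single-coordinate flips, it first reduces (via a triangle inequality through $\min(x,z)$, their Lemma~S7) to the comparable case $x\ge z$, and then proves the coordinatewise estimate $\E[|y_j|]\le |n(j,x)-n(j,z)|$ directly for arbitrary comparable $x\ge z$, using the edge-timer coupling and the inequality $\exp(-at)-\exp(-bt)\le (b-a)t$. The sum $\sum_j|n(j,x)-n(j,z)|$ is then bounded by $k|x-z|$ via a separate combinatorial lemma (their Lemma~S8). Your approach is more elementary in that it bypasses Lemma~S8 entirely: once you are down to a single flip, the degree-counting identity $|n(m,x)|+|\{\text{susceptible neighbors of }m\}|=\deg(m)$ gives the bound immediately. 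The paper's approach, on the other hand, isolates the per-coordinate bound $\E[|y_j|]\le |n(j,x)-n(j,z)|$ as a reusable intermediate step, which it then recycles verbatim in the SIS case (Lemma~4B), where your single-flip bookkeeping would need an additional recovery term at coordinate $m$.
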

\begin{proof}See Appendix.\end{proof}

\begin{lemma}[Lemma 4B]\label{lem4B}
For the SIS process, $\E[|f(x,A_1)-f(z,A_1)|] \leq  L_{SIS} |x-z|$ for all $x$ and $z$, where $L_{SIS}=k+\mu$.
\end{lemma}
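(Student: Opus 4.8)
The plan is to prove the inequality first in the special case that $x$ and $z$ differ in exactly one coordinate, and then obtain the general case by telescoping. For the telescoping step, if $x$ and $z$ differ in $|x-z|=m$ coordinates, flip those coordinates one at a time, in any order, to produce a chain of states $x=y^{(0)},y^{(1)},\dots,y^{(m)}=z$ in which consecutive states $y^{(t-1)},y^{(t)}$ differ in a single node. Since $f(\cdot,A_1)$ is a vector and $|\cdot|$ denotes the 1-norm, the triangle inequality gives, for every scenario $w$, $|f(x,A_1)-f(z,A_1)|\le\sum_{t=1}^{m}|f(y^{(t-1)},A_1)-f(y^{(t)},A_1)|$; taking expectations and using linearity, it suffices to show $\E[|f(y,A_1)-f(y',A_1)|]\le L_{SIS}$ whenever $y$ and $y'$ differ in a single node. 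The particular chain chosen is irrelevant, because the one-node bound will hold for every adjacent pair.

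For the one-node case, let $y$ and $y'$ agree outside node $m$; since $\E[|f(y,A_1)-f(y',A_1)|]$ is symmetric under swapping $y$ and $y'$, assume $m$ is susceptible in $y$ and infected in $y'$. Write $s$ for the number of neighbors of $m$ that are susceptible (these statuses coincide in $y$ and $y'$), so that $n(m,y)=\deg(m)-s$. I would bound $\E[|f(y,A_1)-f(y',A_1)|]=\sum_j\E[|f(y,A_1)_j-f(y',A_1)_j|]$ coordinate by coordinate, using the explicit form $f(x,a)=(g(x,a)-x)/h$ together with the fact that both evaluations read the same realization of $A_1$ — the edge timers that drive infections, plus (in the analogous SIS construction) a recovery timer for each node. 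At coordinate $m$, $f(y,A_1)_m$ is $1/h$ times the indicator that $m$ becomes infected under $y$, which has probability $1-e^{-hn(m,y)}\le hn(m,y)$, while $f(y',A_1)_m$ is $-1/h$ times the indicator that $m$ recovers under $y'$, which has probability $1-e^{-\mu h}\le\mu h$; since these have opposite signs the absolute difference is the sum of the two, and $1-e^{-t}\le t$ gives a contribution of at most $n(m,y)+\mu=\deg(m)-s+\mu$. For a coordinate $j$ that is a susceptible neighbor of $m$, the set of active edges at $j$ gains exactly the one edge joining $j$ to $m$ when we pass from $y$ to $y'$, so the infection indicator at $j$ can only increase, and it increases only on the event that this new edge's timer fires, an event of probability $1-e^{-h}\le h$; hence each such $j$ contributes at most $1$, for a total of at most $s$.

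Every remaining coordinate contributes zero: an infected node's recovery depends only on its own timer and so is the same in both states, and a susceptible node not adjacent to $m$ has the same active-edge set in both states. Summing the three groups, $\E[|f(y,A_1)-f(y',A_1)|]\le(\deg(m)-s+\mu)+s=\deg(m)+\mu\le k+\mu=L_{SIS}$, and plugging this into the telescoping bound yields $\E[|f(x,A_1)-f(z,A_1)|]\le L_{SIS}|x-z|$, which is the claim. The same argument gives Lemma 4A by deleting the recovery contributions: coordinate $m$ then contributes only $n(m,y)=\deg(m)-s$, the total is $\deg(m)\le k=L_{SI}$, and no restriction $h\le1$ is needed, since the only analytic fact used is $1-e^{-t}\le t$ for $t\ge0$.

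I expect the main obstacle to be the bookkeeping in the one-node case, and specifically getting the constant down to $k+\mu$ rather than $2k+\mu$. Bounding $\E[|f(x,A_1)-f(z,A_1)|]$ coordinate by coordinate directly on arbitrary $x$ and $z$ charges each discrepant node about $k$ at the node itself and, separately, up to one more unit for each of its susceptible neighbors, which double-counts the degree term. Flipping one node at a time fixes this: when $m$ is flipped, the charge $n(m,y)$ at $m$ (its infected neighbors) and the total charge $s$ at $m$'s susceptible neighbors together use up only $\deg(m)$. One must also verify that the reverse flip (node $m$ going from infected to susceptible) produces the same bound, which it does by the symmetry invoked above.
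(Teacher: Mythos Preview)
Your argument is correct. The route, however, is genuinely different from the paper's, and it is worth recording the comparison.

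The paper first invokes Lemma~S7 to reduce to the ordered case $x\ge z$, then bounds each coordinate separately by $\E[y_j]\le |n(j,x)-n(j,z)|+\mu|x_j-z_j|$ via a three-case analysis (using Lemma~S3 for the case $x_j=z_j=0$), and finally sums these using the combinatorial Lemma~S8, $\sum_j |n(j,x)-n(j,z)|\le k|x-z|$. Your proof instead telescopes along a Hamming path from $x$ to $z$ and analyzes a single flip directly. This buys you two simplifications: you never need Lemma~S7 or Lemma~S8, and the delicate inequality of Lemma~S3 collapses to the trivial observation that the indicator at a susceptible neighbor can change only if the one new edge timer fires, with probability at most $h$. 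The decomposition $\deg(m)=n(m,y)+s$ you exploit is exactly what Lemma~S8 encodes in the aggregate, but localizing to a single node makes it transparent why the constant is $k+\mu$ rather than $2k+\mu$. Conversely, the paper's per-coordinate bound $\E[y_j]\le |n(j,x)-n(j,z)|+\mu|x_j-z_j|$ is a sharper pointwise statement than anything your argument produces, and could be reused if one wanted a weighted Lipschitz estimate; your method only yields the summed bound. Your closing remark that the same argument recovers Lemma~4A by dropping the recovery term is also correct.
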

\begin{proof}See Appendix.\end{proof}

\begin{lemma}[Lemma 5]\label{lem5}
Euler's method is 0-stable, that is for any two sequences of random variables $(Y_i)$ and $(Z_i)$ adapted to $(F_i)$ with $Y_0=Z_0$,
\[ \E[|Y_i-Z_i|] \leq  K \max_{1\leq j\leq i} \E[|D(Y_j,Y_{j-1},A_j)-D(Z_j,Z_{j-1},A_j)|],\]
where $K=(1/L)(\exp(LT)-1)$, $T=ih$, and $L$ is the Lipschitz constant from Lemma 4 (i.e., \nameref{lem4A} or \nameref{lem4B} depending on the process).
Depending on the process we may write $K$ as $K_{SI}$ or $K_{SIS}$.
\end{lemma}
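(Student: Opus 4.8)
The plan is to mimic the classical 0-stability proof for Euler's method (as in \cite{ascher1998computer}), pushing expectations through each estimate and invoking Lemma 4 via a conditioning argument. Set $e_j = Y_j - Z_j$. From the definition of the difference operator, $Y_j = Y_{j-1} + h f(Y_{j-1},A_j) + h D(Y_j,Y_{j-1},A_j)$ and likewise for $Z$. Subtracting these two identities and applying the triangle inequality for the 1-norm gives
\[ |e_j| \leq |e_{j-1}| + h\,|f(Y_{j-1},A_j) - f(Z_{j-1},A_j)| + h\,|D(Y_j,Y_{j-1},A_j) - D(Z_j,Z_{j-1},A_j)|. \]

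Next I would take expectations of both sides. The term that needs work is $\E[|f(Y_{j-1},A_j) - f(Z_{j-1},A_j)|]$: here $Y_{j-1}$ and $Z_{j-1}$ are $F_{j-1}$-measurable while $A_j$ is independent of $F_{j-1}$ and has the same law as $A_1$. Conditioning on $F_{j-1}$ and applying Lemma 4 pointwise in the realized states yields $\E[|f(Y_{j-1},A_j) - f(Z_{j-1},A_j)| \mid F_{j-1}] \leq L|e_{j-1}|$, and hence, by the tower property, $\E[|f(Y_{j-1},A_j) - f(Z_{j-1},A_j)|] \leq L\,\E[|e_{j-1}|]$. Writing $a_j = \E[|e_j|]$ and $\delta = \max_{1\leq j\leq i}\E[|D(Y_j,Y_{j-1},A_j) - D(Z_j,Z_{j-1},A_j)|]$, and using $a_0 = 0$ (since $Y_0 = Z_0$), I get the scalar recursion $a_j \leq (1+hL)\,a_{j-1} + h\delta$.

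Finally I would solve this recursion by iteration, a discrete Grönwall step: $a_i \leq h\delta \sum_{m=0}^{i-1}(1+hL)^m = (\delta/L)\big((1+hL)^i - 1\big)$, and then the elementary inequality $1+hL \leq e^{hL}$ together with $ih = T$ gives $a_i \leq (\delta/L)(e^{LT}-1) = K\delta$, which is exactly the claimed bound. All the expectations appearing are finite because the graph states, and therefore $f$ and $D$, take values in a finite set.

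The only delicate point is the conditioning step, since Lemma 4 is stated only for deterministic arguments. I would make explicit that, by independence of $A_j$ from $F_{j-1}$ and the fact that $A_j \stackrel{d}{=} A_1$, the conditional expectation $\E[|f(Y_{j-1},A_j) - f(Z_{j-1},A_j)| \mid F_{j-1}]$ equals the deterministic function $(y,z)\mapsto\E[|f(y,A_1)-f(z,A_1)|]$ evaluated at $(Y_{j-1},Z_{j-1})$, which Lemma 4 bounds by $L|Y_{j-1}-Z_{j-1}|$. Everything else is the standard geometric-sum and exponential-bound manipulation, and the argument is indifferent to whether the step sizes are constant.
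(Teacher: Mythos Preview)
Your proposal is correct and follows essentially the same route as the paper's proof: both rewrite $Y_j-Z_j$ in terms of $Y_{j-1}-Z_{j-1}$, the $f$-increment, and the $D$-residual, apply the triangle inequality, invoke Lemma~4 via the independence of $A_j$ from $F_{j-1}$, and iterate the resulting recursion $a_j\leq(1+hL)a_{j-1}+h\delta$ to the geometric sum bounded by $(e^{LT}-1)/L$. Your explicit conditioning argument for applying Lemma~4 to random states is in fact more careful than the paper's one-line justification.
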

\begin{proof}
The proof is given in the Appendix but essentially follows \cite[p41]{ascher1998computer} using Lemma 4 for the Lipschitz property.
\end{proof}

We next prove Theorem 2 (we do not distinguish between Theorem 2A and Theorem 2B because the proofs are analogous).
 
\begin{proof}[Proof of Theorem 2]
Since $X_0=\tilde{X}_0=x_0$, we can substitute $Y_i=X_i$ and $Z_i=\tilde{X}_i$ into \nameref{lem5}. Note that $D(X_j,X_{j-1},A_j)=0$, and $D(\tilde{X}_j,\tilde{X}_{j-1},A_j)=d_j$. Thus, $\E[|\epsilon_i|] \leq K \max_{1\leq j\leq i} \E[|d_j|]$.  Applying Lemma 3 proves the claim.
\end{proof}

Note that our proofs (in particular those of Lemma 5 and Theorem 2) also work with variable step sizes, $(h_i)$, where we let $h=\max_{1\leq j\leq i} h_i$.

\section{Numerical Example} 
While this is a theoretical paper we included a numerical example to see how the DTS compares the DES in practice. Our two test graphs were a 30x30 toroidal lattice (i.e., one that wraps around on all four sides) and a small world random graph.  The small world graph was created by starting with the toroidal 30x30 lattice and then randomly distributing an extra 450 edges in such a way that every node has five edges.  At the start of each simulation we randomly infected 10\% of the nodes.  We considered both the SI and SIS processes with an infection rate of 1 and a recovery rate (for the SIS process) of 0.2. Figure 1 shows the distribution of the prevalence at time 1 for the DES and the DTS with step sizes of 0.01 and 0.0215, while Figure 2 shows the difference between the average prevalence at time 1 between the DES and DTS of different step sizes.  In each case we used 1500 replications.  In Figure 2, we show a line of unit slope on the log-log plot since the theory we developed above suggests that the error is linear in the size of the time step.

While the two figures describe the accuracy of the DTS, the following table compares the computational costs.  We again consider the same cases as in Figure 1 but in addition to the average difference in prevalence also look at the number of events (i.e., changes in the state of a node); the number of time steps; and the CPU time.  Of course for a DES, the number of time steps will equal the number of events, and for a DTS, it will equal the reciprocal of the step size (since we ran the simulation until time 1).  The ratio of events to time steps tells us how many events are batched each time step on average: about five per step for the smaller step and ten per step for the larger steps.  The difference in number of events between between the DTS and DES tells us the number of secondary events that are lost in the DTS because they occur in the same time step as the event that caused them (2--4\% depending on the step size).  While the simulation code is not optimized in any way, we nevertheless see speeds up 10x--20x faster for the DTS compared to the DES.  Again, we used 1500 replications for each case.  The last column tells us the difference in the mean prevalence at time 1.  We don't see the expected factor of two difference in the prevalence error between the two step sizes because the slope of line 1 in Figure 2 is not a perfect fit for the smaller step sizes. Nevertheless, even with the larger time steps the average difference in prevalence is less than 1.5 percentage points.

\begin{figure}[h]
	\centering
	\includegraphics[width=0.49\textwidth]{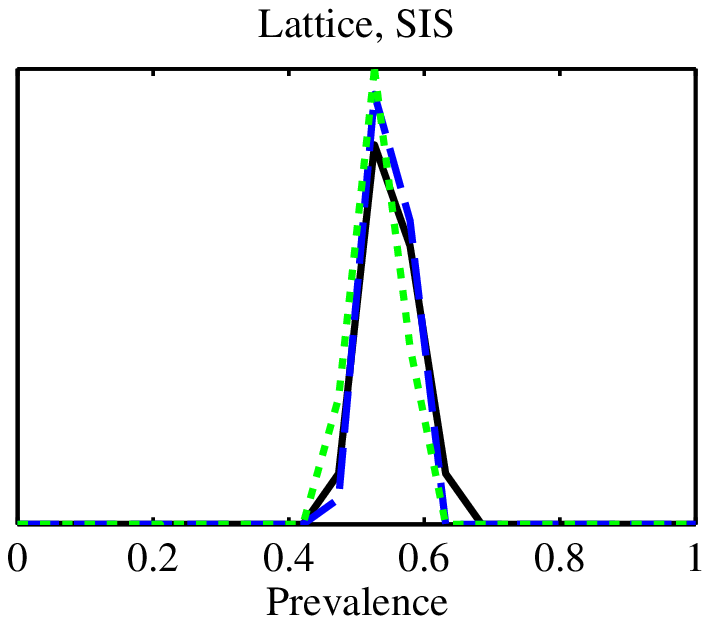}
	\includegraphics[width=0.49\textwidth]{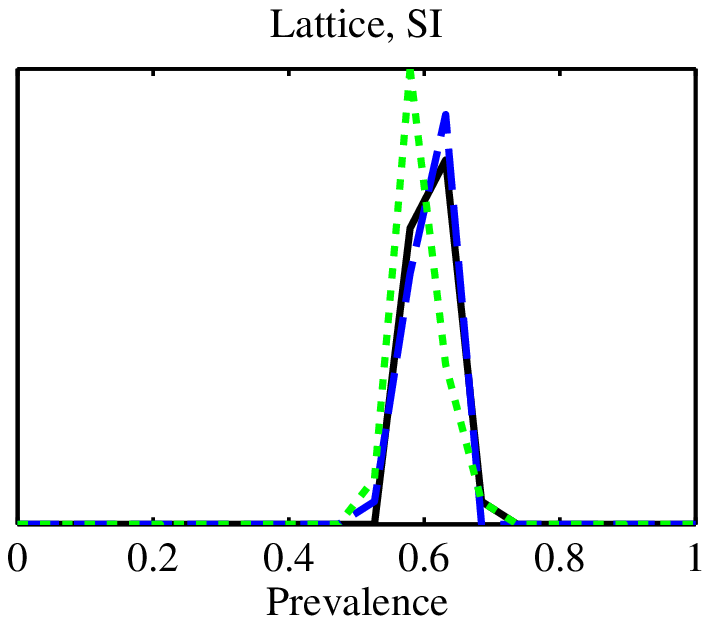}
	\includegraphics[width=0.49\textwidth]{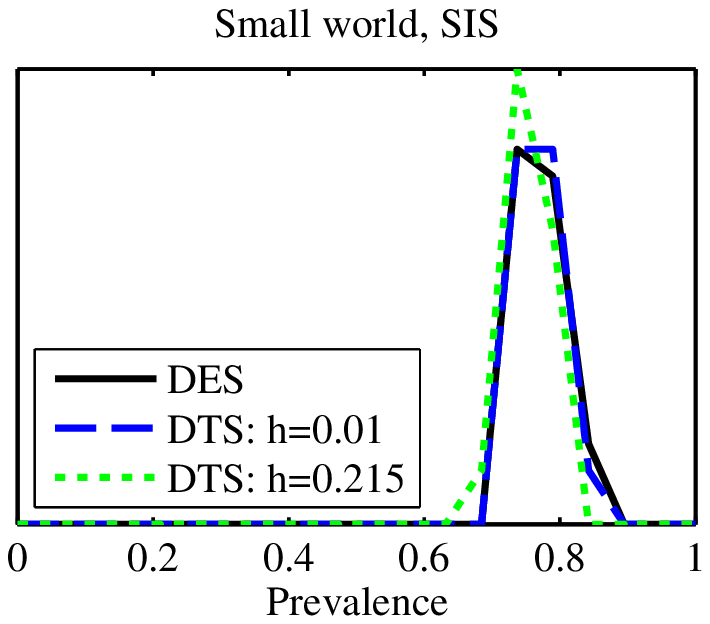}
	\includegraphics[width=0.49\textwidth]{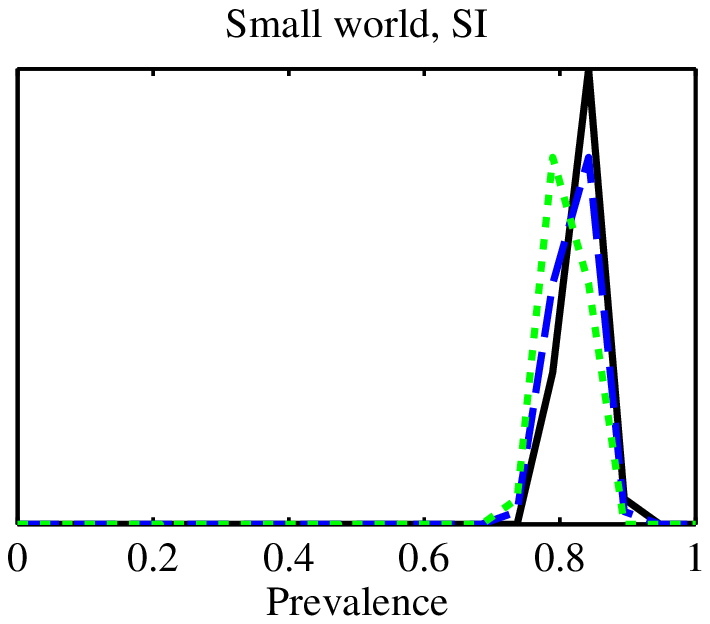}
	\caption{Probability density of the prevalence at time 1. We compare how well a DTS with two different step sizes approximates the DES.}
	\label{fig1}
\end{figure}

\begin{figure}[h]
	\centering
	\includegraphics[width=0.49\textwidth]{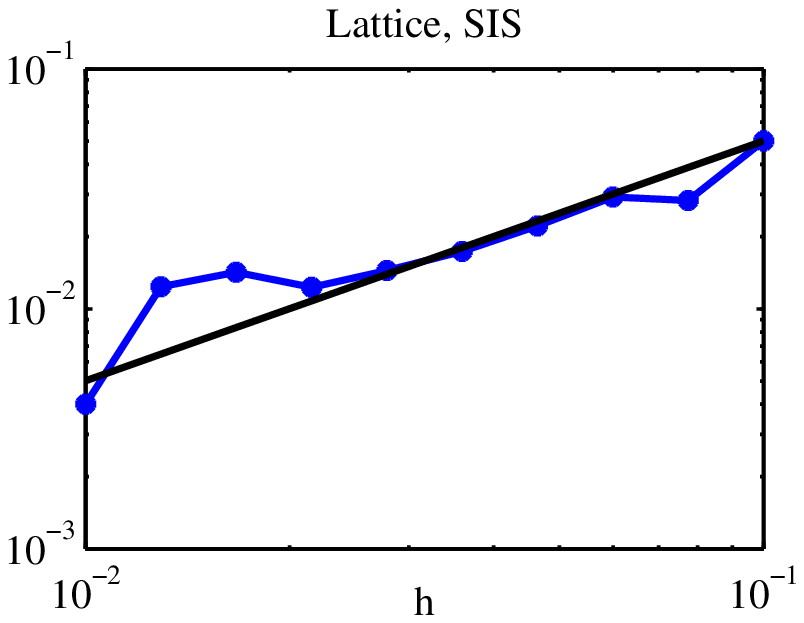}
	\includegraphics[width=0.49\textwidth]{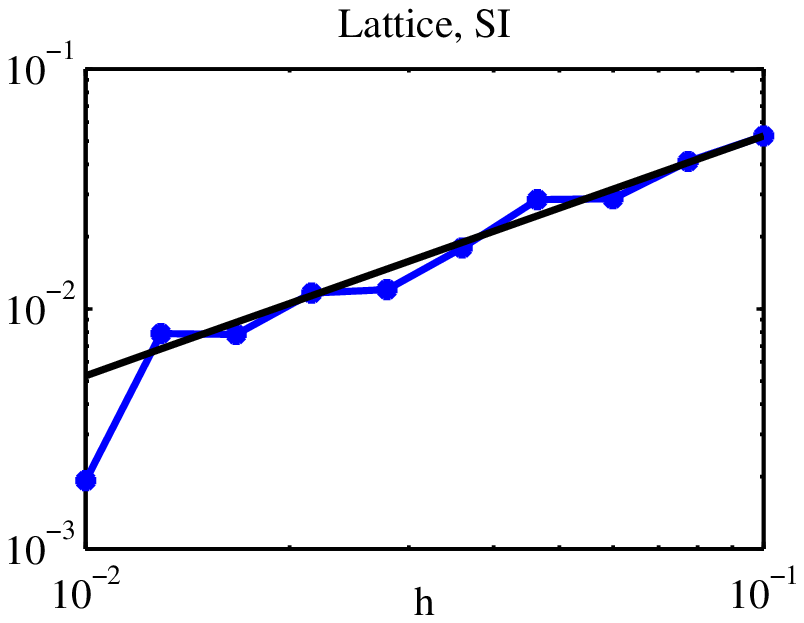}
	\includegraphics[width=0.49\textwidth]{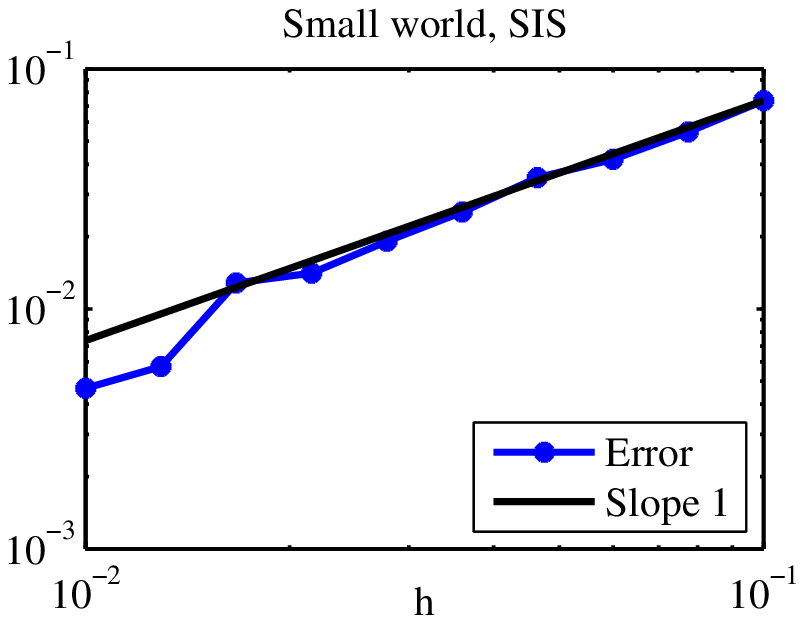}
	\includegraphics[width=0.49\textwidth]{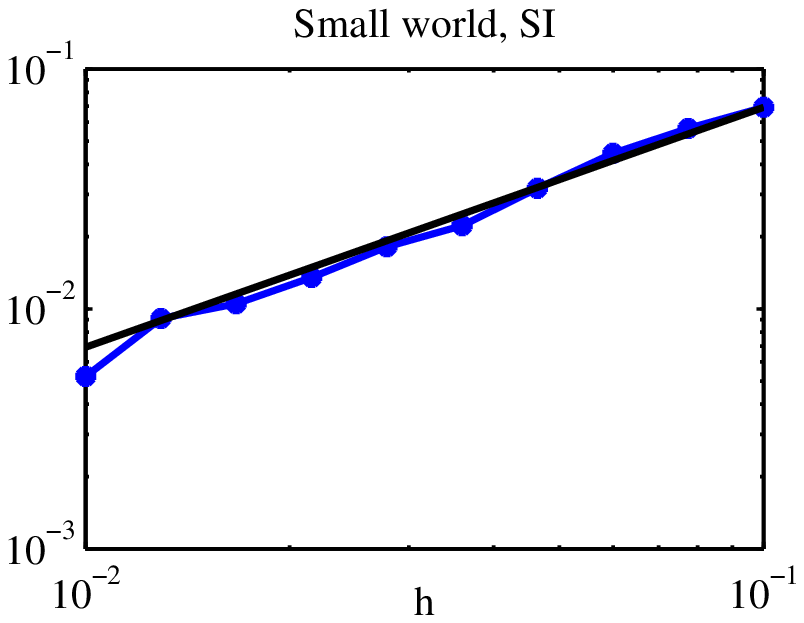}
	\caption{Absolute difference of the average prevalence at time 1 of the DES and the DTS.}
	\label{fig2}
\end{figure}

\begin{table}[h]
\centering
\begin{tabular}{lllrrrr}
\hline
Graph & Process & Algorithm & Events & Time Steps & CPU Time (s) & Prev. Diff.\\
\hline
Lattice & SIS & DES & 525.4 & 525.4 & 5.44 & \\
 &  & DTS: h=0.01 & 517.4 & 100.0 & 0.48 & 0.004\\
 &  & DTS: h=0.0215 & 505.1 & 46.0 & 0.27 & 0.012\\
\hline
Lattice & SI & DES & 464.6 & 464.6 & 3.16 & \\
 &  & DTS: h=0.01 & 461.9 & 100.0 & 0.48 & 0.002\\
 &  & DTS: h=0.0215 & 449.2 & 46.0 & 0.26 & 0.012\\
\hline
Small world & SIS & DES & 757.8 & 757.8 & 7.76 & \\
 &  & DTS: h=0.01 & 744.6 & 100.0 & 0.52 & 0.013\\
 &  & DTS: h=0.0215 & 731.6 & 46.0 & 0.29 & 0.014\\
\hline
Small world & SI & DES & 660.9 & 660.9 & 4.54 & \\
 &  & DTS: h=0.01 & 655.2 & 100.0 & 0.51 & 0.005\\
 &  & DTS: h=0.0215 & 644.0 & 46.0 & 0.28 & 0.014\\
\hline
\end{tabular}
\caption{Comparison of algorithms.}
\label{table1}
\end{table}

\section{Discussion of Computational Cost} 
To complement the numerical example of the previous section, we now provide a theoretical discussion of the computational cost of the DES and DTS. A DTS until time 1 with step size $h$ will have $1/h$ steps and differ from the DES in (i.e., have a global error of) $O(n)h$ nodes.  Dividing by $n$ we find that the difference in prevalence to the DES is $\delta=O(h)$.  Thus we can describe the number of time steps $O(1/\delta)$ in terms of the prevalence error.  Note how the number of time steps at a given level of accuracy does not depend on $n$, the size of the graph.  Nevertheless, the work in each time step will still depend on $n$.

In order to take this analysis further, to analyze the computational cost of the DES and the work per time step of the DTS, we turn to a concrete example.  Specifically, we will count the number of timers that need to be created/considered in the DTS versus the DES.  For example in an SI process on a tree, each node can be infected exactly one way, each edge being used once as a timer, and thus the computational cost is the same in the DES as in the DTS, for all step sizes.  This however is a special properties of trees, for general graphs, the DTS will use fewer timers than the DES.  One explanation is that by batching events in the DTS, some timers are not needed.  Specifically if nodes A and B share an edge and are infected in the same time step, then the timer for A infecting B is not required, while it would be required in the DES if A was infected first.  A network interpretation of this explanation is as follows.  Consider an SI process on a network where currently the nodes $I$ are infected and the nodes $S$ are susceptible.  For the next time step of the DTS, we create timers for all edges between $I$ and $S$.  If nodes $I'\subset S$ get infected in that step, then we never need to consider timers for infections (i.e., edges) between two nodes in $I'$, which we may need to for a DES, since in a DES, $I'$ always contains only a single node.

To make this more concrete, we turn to a random graph where every node has $k$ edges. (Such a graph can be created by placing edges uniformly at random among pairs of nodes, which don't already have degree $k$ or an edge between them.)  Considering the SIS process on this graph, we will estimate the computational costs.  Suppose our simulation starts near the steady-state prevalence, $p=|x_0|/n$.  Then for the next time step of the DTS we need approximately $|x_0|k(1-p)$ infection timers, one for each edge between $S$ and $I$. We ignore recovery events in this discussion because they differ less between the DTS and the DES. For the DES, there will be approximately $h|x_0|k(1-p)$ infection events in the next step of size $h$, each of which we can expect to create $k(1-p)$ additional timers.  Essentially creating $h|x_0|k^2(1-p)^2$ additional timers during the time step.  Thus the DES has a factor $kh(1-p)=O(\delta)$ more timers than the DTS in the same time period.  Note that the difference doesn't scale with $n$. This analysis does not find any economies or diseconomies of scale.

One place where there may be economies of scale for DTS and where our existing analysis is too conservative is when the network is composed of disjoint copies of a smaller network.  In that case, each disjoint subnetwork acts as an independent replication of the simulation.  In that case the central limit theorem applies and the difference in the number of infected nodes should scale with $\sqrt{n}$ instead of $n$, and thus the difference in prevalence should decrease as $1/\sqrt{n}$ instead of remaining constant.

\section{Conclusion and Future Work} 
In this paper we focused on network infection processes and proved error bounds for the accuracy of DTS as compared to DES.  We specifically focused on SI and SIS processes on networks with bounded degree and proved a strong convergence result where the expected difference in the number of infected nodes is proportional to the number of nodes and the step size.  This is the first such result.  It is also a result that brings together several diverse strands of research: DES, Markov chains, numerical methods for ODEs, dynamic processes on networks, and epidemiology. In section 5 we then demonstrate using a numerical example that this bound is linear with the step size and that DTS provide computational savings.

There are two directions for future work.  The practical direction would be to confirm the benefits of DTS using larger, less-stylized, simulations of disease spread, such as for example a simulation of HIV spread in an urban population of men who have sex with men.  The theoretical direction would be to extend these results to the more general stochastic processes found in the aforementioned practical simulations.  These are processes where nodes may have different susceptibilities to infection; progress to different infectious states (e.g., an incubation period or an acute stage of infection); and be put on treatment, essentially allowing nodes to be in more than just two states (susceptible and infected).  A useful framework for these more general stochastic processes are stochastic actor models \cite{Snijders2010} developed by social network scientists.  Methods have been developed to parameterize such models from data but ways to speed up their simulation have not.  Our contributions and such future improvements allow epidemiologists to quickly simulate disease spread on large population over decades to evaluate the efficacy of different intervention alternatives.

\bibliography{paperRefs}

\section{Appendix}

\begin{thm}[Theorem 1A]\label{thm1A}
For the SI process, $X(ih) \geq  X_i$ a.s.
\end{thm}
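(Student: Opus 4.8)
The plan is to prove $X(ih)\ge X_i$ by induction on $i$, exploiting the fact that, by construction, the DES and the DTS over the interval $[ih,(i+1)h)$ are driven by the \emph{same} vector $A_{i+1}$ of $|E|$ iid $\expdist(1)$ edge clocks. The base case $i=0$ is immediate since $X(0)=x_0=X_0$. For the inductive step I fix a scenario $w\in\Omega$ on which $I(X_i)\subseteq I(X(ih))$ holds and, conditioning on $A_{i+1}$, aim to show $I(X_{i+1})\subseteq I(X((i+1)h))$.

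First I would pin down exactly one DTS step. Because the DTS refreshes timers only at the end of a step, a susceptible node $v\in S(X_i)$ becomes infected in $X_{i+1}$ iff at least one edge $e$ from $v$ to a node of $I(X_i)$ has $A_{i+1}(e)\le h$; the minimum of the $n(v,X_i)$ iid $\expdist(1)$ clocks on those edges is $\le h$ with probability exactly $1-\exp(-h\,n(v,X_i))$, and the edge sets $n(v,X_i)$ for distinct susceptible $v$ are disjoint, which recovers both the prescribed infection probability and the independence across nodes. Hence $I(X_{i+1})=I(X_i)\cup\{v\in S(X_i):\exists\, e=(v,u),\ u\in I(X_i),\ A_{i+1}(e)\le h\}$.

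Now I would treat the two kinds of nodes in $I(X_{i+1})$. A node already in $I(X_i)$ lies in $I(X(ih))$ by the induction hypothesis, and since the SI dynamics never clear an infection, $X((i+1)h)=\tilde{g}(X(ih),A_{i+1})\ge X(ih)$, so it remains in $I(X((i+1)h))$. For a node $v\in S(X_i)$ freshly infected via an edge $e=(v,u)$ with $A_{i+1}(e)\le h$: the induction hypothesis gives $u\in I(X(ih))$. If $v$ is already infected at time $ih$ in the DES we are back in the first case; otherwise $v$ is susceptible at $ih$, so in the DES the edge $e$ carries an active timer equal to $A_{i+1}(e)$ at the start of the interval, and that timer either fires at time $ih+A_{i+1}(e)\le (i+1)h$ or is deactivated earlier because $v$ becomes infected through some other edge first — either way $v\in I(X((i+1)h))$. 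Since this exhausts $I(X_{i+1})$ the induction closes; the measure-zero event $A_{i+1}(e)=h$ is the reason the conclusion is only almost sure.

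The one delicate point is the coupling bookkeeping in the inductive step: I must be sure that the edge clock triggering $v$'s infection in the DTS corresponds to a timer that is genuinely active from the start of the DES interval (its source $u$ is infected at $ih$ by the hypothesis and remains so), and that re-deriving timers as the DES progresses can only make $v$'s DES infection time \emph{earlier}, never later than $ih+A_{i+1}(e)$ — so the DTS infection is always witnessed by the DES. Everything else — the one-step decomposition and the monotonicity of the SI dynamics — is routine.
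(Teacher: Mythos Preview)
Your proof is correct and follows essentially the same approach as the paper: induction on $i$ using the shared randomness $A_{i+1}$, with the key observation that any edge timer triggering an infection in the DTS is also active in the DES and fires (or is preempted) by time $(i+1)h$. The paper's write-up is slightly more abstract --- it factors the inductive step as $\tilde{g}(x',a)\ge g(x',a)\ge g(x,a)$ for $x'\ge x$ (DES dominates DTS from the same state, and the DTS transition is monotone in its starting state) --- whereas you argue the containment $I(X_{i+1})\subseteq I(X((i+1)h))$ directly by tracking individual nodes and edges; but the content is the same.
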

\begin{proof}
Since $X(0)=X_0$, it suffices to show that $\tilde{g}(x',a) \geq  g(x,a)$ for all $a$ whenever $x'\geq x$.  We prove this by showing for all $a$, that $\tilde{g}(x,a)\geq g(x,a)$ and that $g(x',a)\geq g(x,a)$ for $x'\geq x$. Now, it is easy to see that $\tilde{g}(x,a)\geq g(x,a)$ given $x$ since all the timers which fire in the DTS in time $h$ also fire in the DES. Finally, given $x'\geq x$, the nodes which are infected in $x$ are also infected in $x'$. Thus, any node which is infected in the next time step in $x$ also gets infected in $x'$, since we use the same set of random numbers $a$. 
\end{proof}

\begin{lemma}[Lemma S1]\label{lemS1}
$\E[\nbdist(r,p)] = r p/(1-p)$.
\end{lemma}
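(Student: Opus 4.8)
The plan is to decompose $\nbdist(r,p)$ as a sum of $r$ iid geometric random variables using the renewal structure of the underlying Bernoulli process, and then invoke linearity of expectation together with the known mean of a geometric random variable.

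First I would set up the underlying sequence of iid Bernoulli trials, each a success with probability $p$ and a failure with probability $1-p$, and run them until the $r$-th failure. Let $G_1$ be the number of successes strictly before the first failure, let $G_2$ be the number of successes strictly between the first and second failures, and in general let $G_m$ be the number of successes strictly between the $(m-1)$-st and $m$-th failures. By construction the total number of successes before the $r$-th failure is $\nbdist(r,p) = \sum_{m=1}^{r} G_m$. The key observation is that, if we treat an original ``failure'' as the terminating event (which occurs with probability $1-p$), then each $G_m$ counts the number of non-terminating trials before the next terminating one, so $G_m \sim \Geometricdist(1-p)$ in the paper's convention, and by the memoryless property of the Bernoulli sequence the $G_m$ are mutually independent.

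Next I would apply the standard identity $\E[\Geometricdist(q)] = (1-q)/q$; with $q = 1-p$ this gives $\E[G_m] = p/(1-p)$. Linearity of expectation then yields $\E[\nbdist(r,p)] = \sum_{m=1}^{r} \E[G_m] = r p/(1-p)$, as claimed. (If an elementary route is preferred, one can instead start from the pmf $\Pr[\nbdist(r,p)=j] = \binom{j+r-1}{j} p^{j}(1-p)^{r}$, shift the summation index by one, and recognize the remaining sum as the normalization of a $\nbdist(r+1,p)$ distribution; but the renewal decomposition above is cleaner.)

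There is no real obstacle here; the only point requiring care is the bookkeeping around which event plays the role of ``success'' in the geometric subcounts, since the convention in Table~1 for $\Geometricdist$ counts failures before the first success. Once the terminating event (the original failure, probability $1-p$) is correctly identified with the ``success'' of the geometric variable, the computation is immediate.
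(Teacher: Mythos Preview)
The paper's own ``proof'' is simply a pointer to Wikipedia or a textbook, so your argument is considerably more detailed than what appears there. Your renewal decomposition into $r$ iid $\Geometricdist(1-p)$ blocks is correct and standard.

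There is, however, a genuine gap relative to how the lemma is actually used in the paper: your main argument tacitly assumes $r$ is a positive integer, since otherwise ``run Bernoulli trials until the $r$-th failure'' and ``$\sum_{m=1}^{r} G_m$'' are not meaningful. But the paper invokes $\nbdist(r,p)$ with non-integer $r$ throughout---for instance $r=1/(k-2)$ in \nameref{lemS4}, $r=N(0)k/(k-2)$ in \nameref{lemS5}, and $r=k/(k-2)$ in \nameref{lemS6}---and Lemma~S1 is then applied to these in the proofs of Lemmas~3A and~3B. So the result is needed for general $r>0$, where the distribution is defined via the pmf $\Pr[\nbdist(r,p)=j]=\binom{j+r-1}{j}p^{j}(1-p)^{r}$ (with the binomial coefficient extended by gamma functions) or equivalently via infinite divisibility.

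Your parenthetical pmf route does cover this: summing $j\binom{j+r-1}{j}p^{j}(1-p)^{r}$, using $j\binom{j+r-1}{j}=r\binom{j+r-1}{j-1}$, and shifting the index recovers $r p/(1-p)$ for all $r>0$. I would promote that computation (or a one-line probability generating function argument) to the main proof and mention the geometric decomposition only as intuition for integer $r$.
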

\begin{proof}Wikipedia or your favorite probability textbook.\end{proof}

\begin{lemma}[Lemma S2]\label{lemS2}
For $0\leq h\leq 1$ and $c\geq 0$, $\exp(c h)-1 \leq  h c \exp(c)$.
\end{lemma}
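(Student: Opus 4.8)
The plan is to expand the left-hand side as a power series and factor out a single copy of $ch$. Writing $\exp(ch)-1 = \sum_{j\geq 1}(ch)^j/j!$, I would pull out the common factor $ch$ to obtain $\exp(ch)-1 = ch\sum_{j\geq 0}(ch)^j/(j+1)!$. Since $(j+1)!\geq j!$ for every $j\geq 0$ and $ch\geq 0$, each term of the remaining series is at most $(ch)^j/j!$, so the series is bounded by $\sum_{j\geq 0}(ch)^j/j! = \exp(ch)$. This gives $\exp(ch)-1 \leq ch\exp(ch)$.

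To finish, I would use the hypotheses $0\leq h\leq 1$ and $c\geq 0$: these give $ch\leq c$ and hence $\exp(ch)\leq\exp(c)$, so $\exp(ch)-1 \leq ch\exp(ch) \leq hc\exp(c)$, which is the claim. The boundary cases $c=0$ and $h=0$ need no argument, since both sides vanish. An alternative and equally short route is the mean value theorem applied to $t\mapsto\exp(ct)$ on $[0,h]$: there exists $\xi\in(0,h)$ with $\exp(ch)-1 = hc\exp(c\xi)$, and then $\xi\leq h\leq 1$ together with $c\geq 0$ gives $\exp(c\xi)\leq\exp(c)$, yielding the bound directly.

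There is no substantive obstacle here; this is a routine calculus estimate. The one point worth care is that one should factor $ch$ out of the series \emph{before} bounding the tail, rather than bounding term-by-term via $h^j\leq h$ — the latter only produces the (differently shaped) bound $h(\exp(c)-1)$, which is in fact also $\leq hc\exp(c)$ but requires the extra elementary observation $1-e^{-c}\leq c$ to convert. Factoring first avoids this detour and lands on the stated inequality in one step.
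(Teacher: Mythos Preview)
Your proposal is correct, and in fact your ``alternative route'' via the mean value theorem is exactly the paper's argument: the paper invokes Taylor's theorem with remainder on $\exp(t)-1$ to obtain $\exp(t)-1=t\exp(t')$ for some $t'\in[0,t]$, hence $\exp(t)-1\leq t\exp(t)$, and then substitutes $t=ch$ and uses $h\leq 1$. Your primary power-series argument is a minor variant that lands on the same intermediate inequality $\exp(ch)-1\leq ch\exp(ch)$ and finishes identically.
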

\begin{proof}
Using Taylor's theorem with remainder on $\exp(t)-1$ with $t\geq 0$, we have $\exp(t)-1 = t \exp(t')$ for some $t'\in [0,t]$. Thus $\exp(t)-1\leq t \exp(t)$.  Substituting $t=ch$ we have $\exp(ch)-1 \leq  h c \exp(ch)$.  Now, $h\leq 1$ and $c\geq 0$ imply $\exp(ch) \leq \exp(c)$, proving the claim.
\end{proof}


\begin{lemma}[Lemma S3]\label{lemS3}
For $0\leq a\leq b$, $(b-a)t-(\exp(-at)-\exp(-bt)) \geq  0$.
\end{lemma}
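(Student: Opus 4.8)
The plan is to recognize $\exp(-at)-\exp(-bt)$ as a difference of values of the single-variable function $s\mapsto\exp(-st)$ at the endpoints $s=a$ and $s=b$, and then to dispatch the inequality with the mean value theorem (equivalently, by writing the difference as an integral over $[a,b]$).

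First I would set $\phi(t)=(b-a)t-(\exp(-at)-\exp(-bt))$ and apply the mean value theorem to $f(s)=\exp(-st)$ on $[a,b]$ (here $t$ is a fixed parameter, so $f'(s)=-t\exp(-st)$): there is $\xi\in[a,b]$ with $f(a)-f(b)=-f'(\xi)(b-a)=(b-a)\,t\exp(-\xi t)$. Substituting this identity for $\exp(-at)-\exp(-bt)$ yields $\phi(t)=(b-a)t-(b-a)\,t\exp(-\xi t)=(b-a)\,t\,(1-\exp(-\xi t))$.

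Next I would read off the sign of each factor. By hypothesis $b-a\ge 0$, and $\xi\ge a\ge 0$. For the remaining factor $t\,(1-\exp(-\xi t))$: if $t\ge 0$ then $\xi t\ge 0$, so $\exp(-\xi t)\le 1$ and hence $1-\exp(-\xi t)\ge 0$, making the product nonnegative (and, should one want it, for $t<0$ the factor $1-\exp(-\xi t)$ is $\le 0$ since then $-\xi t\ge 0$, so the product is again $\ge 0$). Therefore $\phi(t)\ge 0$, which is exactly the claim. The same conclusion can be presented without the mean value theorem via $\phi(t)=\int_a^b t\,(1-\exp(-st))\,ds$, whose integrand is nonnegative pointwise on $s\in[a,b]$ by the identical sign check.

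I do not expect a genuine obstacle here: the only ``idea'' is the rewriting $\phi(t)=(b-a)\,t\,(1-\exp(-\xi t))$ (or the integral form), after which positivity is immediate from $0\le a\le b$ and the elementary bound $\exp(-u)\le 1$ for $u\ge 0$. If the lemma is only ever invoked with $t\ge 0$ — as the later uses with $t=ih=T$ suggest — the parenthetical remark about $t<0$ can simply be omitted.
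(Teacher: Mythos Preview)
Your argument is correct. The mean value theorem (or the equivalent integral identity $\phi(t)=\int_a^b t\bigl(1-\exp(-st)\bigr)\,ds$) does exactly what you say, and the sign analysis in both the $t\ge 0$ and $t<0$ cases is sound.

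The paper, however, packages the inequality differently. Instead of freezing $t$ and varying $s\in[a,b]$, it introduces $\psi(u)=u+\exp(-u)$ and observes that the claimed inequality is precisely $\psi(bt)\ge\psi(at)$. Since $\psi'(u)=1-\exp(-u)$ is nonnegative on $[0,\infty)$ and nonpositive on $(-\infty,0]$, $\psi$ is increasing on the right half-line and decreasing on the left; because $at$ and $bt$ lie on the same half-line with $|bt|\ge|at|$, the inequality follows in both cases. Your MVT approach factors $\phi(t)=(b-a)\,t\,(1-\exp(-\xi t))$ and checks signs, whereas the paper's approach hides the same derivative $1-\exp(-u)$ inside a monotonicity statement for $\psi$. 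The two are close cousins (a substitution $u=st$ turns your integral into $\int_{at}^{bt}\psi'(u)\,du$), but as written they are distinct decompositions: you work in the rate variable $s$, the paper works in the ``time'' variable $u$. Both are equally elementary; your integral form has the minor advantage of avoiding a case split on the sign of $t$ at the cost of one line of justification for the integrand's sign.
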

\begin{proof}
Note that $t+\exp(-t)$ is increasing for $t\geq 0$ because its first derivative is $1-\exp(-t)$ and $\exp(-t)\leq 1$ for $t\geq 0$.  Thus $(bt+\exp(-bt)) - (at+\exp(-at)) \geq  0$, proving the claim for $t\geq 0$.  Note that for $t\leq 0$, $t+\exp(-t)$ is decreasing because its first derivative is $1-\exp(-t)$ and $\exp(-t)\geq 1$ for $t\leq 0$.  Since $bt\leq at$ for $t\leq 0$ it follows that $(bt+\exp(-bt)) - (at+\exp(-at)) \geq  0$, proving the claim for $t\leq 0$.
\end{proof}

For notational convenience we define $N(t)=|X(t)|$ and $N_i=|X_i|$ to be the number of infected nodes in the DES and DTS respectively.

\begin{lemma}[Lemma S4]\label{lemS4} Consider the SI process on a network that is an infinite tree with the root node having $m$ children and all other nodes having $k-1$ children.  Suppose that initially, only the root node is infected, $N(0)=1$. Then $N(t)-N(0) \sim \nbdist(r,p)$ where $r=m/(k-2)$ and $p=1-\exp(-(k-2)t)$.
\end{lemma}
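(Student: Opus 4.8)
The plan is to recognize $N(t)$ as a one‑dimensional pure birth (Markov) process and then identify its law. Since only the root is infected at time $0$ and an SI infection can only cross an edge from an infected to a susceptible endpoint, the infected set is at all times a connected subtree containing the root. On a \emph{tree} this has two consequences: any susceptible node adjacent to the infected set touches it at exactly one vertex (a second contact would create a cycle), so its hazard $n(j,x)$ is exactly $1$; and we can count the infected--susceptible edges exactly. When $N(t)=\ell$, the degrees of the infected nodes sum to $m+(\ell-1)k$ (the root has degree $m$, the other $\ell-1$ infected nodes have degree $k$), and $2(\ell-1)$ of these degree‑endpoints lie on the $\ell-1$ internal infected--infected edges of the subtree, leaving $m+(\ell-1)(k-2)$ infected--susceptible edges, i.e.\ $m+(\ell-1)(k-2)$ susceptible nodes that can be infected next. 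Hence $N(t)$ is a pure birth process on $\{1,2,\dots\}$ started at $1$ with rate $\lambda_\ell=m+(\ell-1)(k-2)$ out of state $\ell$; writing $M(t)=N(t)-N(0)=N(t)-1$, $\beta=k-2$, and $r=m/(k-2)$, the process $M$ is a pure birth process on $\{0,1,2,\dots\}$ started at $0$ with rate $\lambda_j=\beta(r+j)$ out of state $j$ (a linear birth‑with‑immigration process).

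Next I would compute the time‑$t$ law of $M$ via generating functions. Let $p_j(t)=\Pr[M(t)=j]$ and $\phi(s,t)=\E[s^{M(t)}]=\sum_{j\ge0}p_j(t)s^j$. Substituting $\lambda_j=\beta(r+j)$ into the Kolmogorov forward equations $\dot p_j=\lambda_{j-1}p_{j-1}-\lambda_j p_j$ (with $p_{-1}\equiv0$) and summing against $s^j$ gives the first‑order linear PDE $\partial_t\phi=\beta(s-1)\bigl(r\phi+s\,\partial_s\phi\bigr)$ with initial condition $\phi(\cdot,0)\equiv1$. One then checks by direct substitution that $\phi(s,t)=\bigl((1-p)/(1-ps)\bigr)^{r}$, with $p=1-e^{-\beta t}$, solves this PDE and initial condition. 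But $\bigl((1-p)/(1-ps)\bigr)^{r}$ is precisely the probability generating function of $\nbdist(r,p)$: expanding $(1-ps)^{-r}$ by the generalized binomial series recovers the negative binomial pmf $\binom{j+r-1}{j}p^j(1-p)^r$ (read via Gamma functions when $r\notin\mathbb{Z}$). Therefore $M(t)=N(t)-N(0)\sim\nbdist(r,p)$ with $r=m/(k-2)$ and $p=1-\exp(-(k-2)t)$, as claimed. (Alternatively one can skip generating functions and verify the negative binomial pmf directly by induction on $j$ in the forward equations, with base case $p_0(t)=e^{-\lambda_0 t}=e^{-mt}=(1-p)^r$ since $\lambda_0=\beta r=m$.)

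Two minor technical points should be dispatched. First, the birth process is non‑explosive, so $N(t)<\infty$ a.s.\ and the forward equations pin down the law: Feller's test gives $\sum_j 1/\lambda_j=\sum_j 1/(\beta(r+j))=\infty$, so $\sum_j p_j(t)=1$ for all $t$. Second, $r=m/(k-2)$ need not be an integer, so $\nbdist(r,p)$ here means the distribution with pmf $\Gamma(j+r)/(j!\,\Gamma(r))\,p^j(1-p)^r$, whose mean is still $rp/(1-p)$ consistent with \nameref{lemS1}. I expect the one genuinely error‑prone step to be the combinatorial count that yields the factor $k-2$ (rather than $k-1$ or $k$) in the birth rates $\lambda_\ell=m+(\ell-1)(k-2)$; once that is right, the remainder is the textbook solution of a linear birth‑with‑immigration process and is essentially routine.
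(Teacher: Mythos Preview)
Your proof is correct and takes a genuinely different route from the paper's. You compute the exit rate from state $\ell$ directly (the degree-sum minus internal-edges count giving $\lambda_\ell=m+(\ell-1)(k-2)$) and then identify the law of the resulting linear birth-with-immigration chain by solving the generating-function PDE, checking that the negative-binomial pgf $((1-p)/(1-ps))^r$ satisfies it. The paper instead first specializes to $m=k-2$, notes that each new infection raises the number of active timers by exactly $k-2$, recognizes $N(t)$ as a Yule process so that $N(t)-1\sim\Geometricdist(1-p)=\nbdist(1,p)$, and then handles general $m$ by writing $N(t)-1=\sum_{i=1}^m Z_i$ as a sum of iid subtree contributions and invoking divisibility of the negative binomial to get $Z_i\sim\nbdist(1/(k-2),p)$ and hence $\sum_i Z_i\sim\nbdist(m/(k-2),p)$. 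The paper's argument is shorter and more probabilistic, exploiting the tree's branch independence and the Yule connection; yours is more self-contained (no appeal to the Yule result or to uniqueness of negative-binomial roots) and also explicitly dispatches non-explosion and the non-integer-$r$ interpretation of $\nbdist(r,p)$, points the paper leaves implicit.
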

\begin{proof}
Each time a node is infected the number of edges between infected and susceptible nodes (i.e., the number of timers in the DES) increases by $k-2$ (the timer causing the infection is removed and $k-1$ timers are added due to the children of the newly infected node). Then assuming that $m=k-2$, the cumulative number of infections, $N(t)-N(0)$, is a a pure birth process where the hazard rate of an infection after $i$ infections is $\lambda_i = (k-2)i$, the so-called Yule process.  It is well known that the distribution of this process at time $t$ is $\Geometricdist(1-p)=\nbdist(1,p)$.
When $m\neq k-2$, we let $Z_1,\dotsc,Z_m$ denote the number of infections at time $t$ in each subtree of the root node.  Note that $N(t)-N(0)=Z_1+\cdots+Z_m$ and that the $Z_i$ are iid. Since the negative binomial distribution is divisible, $\nbdist(r_1,p)+\nbdist(r_2,p)\sim \nbdist(r_1+r_2,p)$, it follows from the $m=k-2$ case that $Z_1\sim \nbdist(1/(k-2),p)$.  Thus for general $m$, $N(t)-N(0)=Z_1+\cdots+Z_m\sim \nbdist(r,p)$.
\end{proof}

\begin{lemma}[Lemma S5]\label{lemS5} For the SI process, $N(t)-N(0) \leq  \nbdist(r,p)$ in distribution, where $r=N(0)k/(k-2)$ and $p=1-\exp(-(k-2)t)$.
\end{lemma}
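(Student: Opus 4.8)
The plan is to dominate the SI process on our graph by an SI process on a single infinite tree and then read off the answer from \nameref{lemS4}. Let $x_0$ be the initial state, $N(0)=|x_0|$, and let $T$ be the infinite tree whose root has $N(0)k$ children and all of whose other nodes have $k-1$ children. Run an SI process on $T$, with the same per-edge infection rate, started from the root alone infected; write $\tilde{M}(t)$ for the number of nodes it has infected beyond the root by time $t$, and $M(t)=N(t)-N(0)$ for the number of new infections in the real graph. The goal is a coupling in which $M(t)\le\tilde{M}(t)$ almost surely for every $t$; since, by \nameref{lemS4} applied with its ``$m$'' equal to $N(0)k$, we have $\tilde{M}(t)\sim\nbdist(N(0)k/(k-2),p)$ with $p=1-\exp(-(k-2)t)$, this immediately yields $N(t)-N(0)\le\nbdist(N(0)k/(k-2),p)$ in distribution.

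I would build the coupling around the number of active timers, i.e.\ the number of edges joining an infected node to a susceptible one, in each process. On $T$, the root contributes $N(0)k$ active edges at time $0$, and each subsequent infection of a (necessarily non-root) node removes exactly the edge that fired and adds exactly the $k-1$ edges to that node's children, all susceptible; so after $i$ new infections there are exactly $N(0)k+i(k-2)$ active edges. In the real graph there are at most $N(0)k$ active edges at time $0$ (the infected nodes have degree at most $k$), and when a susceptible node $v$ is infected through an edge $e$ the active set loses $e$, gains one edge for each of $v$'s other susceptible neighbours (at most $k-1$ of them), and loses one for each of $v$'s other infected neighbours, a net change of at most $k-2$; hence after $i$ new infections there are at most $N(0)k+i(k-2)$ active edges. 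Consequently, as long as $M(t^-)\le\tilde{M}(t^-)$, the total infection hazard in the real graph is at most that on $T$. I would then invoke the standard coupling of counting processes with ordered stochastic intensities (a graphical/Poisson-embedding construction, or equivalently a thinning in which each jump of $\tilde{M}$ is passed to $M$ only with the appropriate probability while one maintains an injection of the real graph's active timers into those of $T$) to conclude $M(t)\le\tilde{M}(t)$ a.s., and therefore in distribution.

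The routine remaining pieces are: checking that \nameref{lemS4} applies verbatim with the root degree taken to be $N(0)k$ (its proof used only that the root has $m$ children, every other node has $k-1$, and growth is a pure birth process with per-infection increment $k-2$), and translating a.s.\ domination into the stochastic ordering in the statement. The one genuinely delicate point I expect is the coupling in the previous paragraph: one must verify that the timer-to-timer injection survives every event type -- a firing that infects a new node, and also a ``silent'' loss of an active edge when its susceptible endpoint is infected from elsewhere -- while staying consistent with a single shared source of randomness. If one prefers to avoid the injection, an equivalent route is to observe that $\tilde{M}$ is exactly the linear birth-with-immigration (Yule) process with rates $\lambda_i=N(0)k+i(k-2)$, whose time-$t$ law is the stated negative binomial, and to couple $M$ to it by thinning; the essential content is the same hazard inequality.
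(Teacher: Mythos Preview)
Your proposal is correct and takes essentially the same approach as the paper: both bound the number of active susceptible--infected edges by $N(0)k+i(k-2)$ after $i$ new infections and then dominate by the tree process of \nameref{lemS4} with root degree $N(0)k$. The paper's proof is terser---it simply asserts the hazard-rate comparison and invokes \nameref{lemS4}---whereas you spell out the coupling mechanism (thinning/Poisson embedding) more explicitly; but the substance is identical.
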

\begin{proof}
To obtain an upper bound we maximize the hazard rate at every point in time.  Since the maximum degree in the network is $k$, the hazard rate at time 0 is at most $N(0)k$.  After a new infection, the hazard rate must decrease at least by one (from the removal of the timer of the newly infected node) and can increase at most $k-1$ (since the newly infected node has at most $k$ edges and one of those is to an already infected node).  Hence the hazard rate must increase no more than $k-2$.  The hazard rates from this upper bound are those of an SI process on an infinite tree with each node having $k-1$ children and the root node having $N(0)k$ children. Invoking \nameref{lemS4} proves the claim.
\end{proof}

\begin{lemma}[Lemma S6]\label{lemS6} For the SI process, $N(h)-N_1\leq \nbdist((N_1-N(0))r,p)$ in distribution, where $r=k/(k-2)$, $p=1-\exp(-h(k-2))$.
\end{lemma}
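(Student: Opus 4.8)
The plan is to give a first-passage-percolation description of the SI dynamics on the first time step and then reduce \nameref{lemS6} to \nameref{lemS5} applied to the subgraph induced by the initially susceptible nodes. Recall from the explicit construction of the filtration that each edge $e$ carries a single $\expdist(1)$ timer $A_1(e)$ during the first step, activated exactly once, at the instant its earlier endpoint becomes infected. A routine induction on infection events shows that in the DES a node $j$ is infected by time $h$ exactly when the $A_1$-weighted shortest-path distance from $I(x_0)$ to $j$, which I write $d(I(x_0),j)$, is less than $h$; likewise $j\in I(X_1)$ exactly when $j\in I(x_0)$ or $\min_{e\in n(j,x_0)}A_1(e)<h$, i.e.\ when $j$ is reachable from $I(x_0)$ by a single edge of length $<h$. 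Hence $N(h)-N_1=\abs{I(X(h))\setminus I(X_1)}$ and $N_1-N(0)=\abs{I(X_1)\setminus I(x_0)}$.

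The combinatorial core is the claim that, for every $j\in I(X(h))\setminus I(X_1)$, there is a path from some $w\in I(X_1)\setminus I(x_0)$ to $j$ whose vertices are all susceptible in $x_0$ and whose $A_1$-length is $<h$. To prove it, fix $j$ and a shortest path from $I(x_0)$ to $j$, written $v_0,v_1,\dots,v_m=j$ with $v_0\in I(x_0)$; replacing the path by its suffix starting at the last source vertex it meets (again a shortest path, its initial vertex being a source), we may assume $v_1,\dots,v_m\notin I(x_0)$. Since $j\notin I(X_1)$ rules out $m=1$, we have $m\geq 2$. The prefix $v_0,v_1$ is itself a shortest path to $v_1$, so $A_1(v_0v_1)=d(I(x_0),v_1)\leq d(I(x_0),j)<h$; thus $\min_{e\in n(v_1,x_0)}A_1(e)<h$, so $v_1\in I(X_1)\setminus I(x_0)$, while the suffix $v_1,\dots,v_m$ avoids $I(x_0)$ and has $A_1$-length $d(I(x_0),j)-d(I(x_0),v_1)<h$. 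Take $w=v_1$.

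Let $G'$ be the subgraph induced by $S(x_0)$, which has maximum degree at most $k$. The claim just proved says that, pathwise in $A_1$, $I(X(h))\setminus I(X_1)$ is contained in the set of nodes that the SI process on $G'$ --- started from $I(X_1)\setminus I(x_0)$ and driven by the timers $A_1(e)$, $e\in E(G')$ --- infects within time $h$ beyond its initial set; hence $\abs{I(X(h))\setminus I(X_1)}$ is at most the time-$h$ growth of that $G'$-process. Now the edges of $G'$ join two nodes susceptible in $x_0$, hence are disjoint from the edges incident to $I(x_0)$, and $I(X_1)$ is a deterministic function of the timers on those incident edges; so conditionally on $I(X_1)$ the timers $(A_1(e))_{e\in E(G')}$ remain i.i.d.\ $\expdist(1)$. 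Therefore, conditionally on $I(X_1)$, $N(h)-N_1$ is stochastically dominated by the time-$h$ growth of an SI process on a graph of maximum degree $\leq k$ started from $\abs{I(X_1)\setminus I(x_0)}=N_1-N(0)$ infected nodes, and the hazard-rate argument of \nameref{lemS5} --- which only uses the degree bound $k$ --- bounds this above in distribution by $\nbdist\bigl((N_1-N(0))k/(k-2),\,1-\exp(-h(k-2))\bigr)$. Since this bound depends on $I(X_1)$ only through $N_1-N(0)$, the conditioning may be dropped, giving \nameref{lemS6}.

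The step I expect to be most delicate is this reduction: one must check that the pathwise containment of $I(X(h))\setminus I(X_1)$ inside the auxiliary $G'$-process is exact, and --- more importantly --- that conditioning on $I(X_1)$ genuinely leaves the timers on $E(G')$ i.i.d.\ exponential, so that \nameref{lemS5} may be applied conditionally. The first-passage-percolation description and the prefix-of-shortest-path bookkeeping are routine by comparison.
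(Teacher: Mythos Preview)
Your argument is correct and follows the paper's route: decompose $X(h)$ into $x_0$, the directly infected set $X_1-x_0$, and the residual $X(h)-X_1$, then dominate the last by an SI process seeded at $X_1-x_0$ run for time $h$ and invoke \nameref{lemS5}. Your version is in fact more careful than the paper's short proof---restricting to the induced subgraph $G'$ on $S(x_0)$ and noting that its edge timers are disjoint from those determining $X_1$ cleanly justifies the conditional application of \nameref{lemS5}, a step the paper leaves implicit.
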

\begin{proof}
The infected nodes $X(h)$ at time $h$ are either those originally infected, $x_0$; those directly infected, $X_1-x_0$; or those subsequently infected, $X(h)-X_1$.  All the infected nodes in $X(h)-X_1$ stem (directly or indirectly) from infection by nodes in $X_1-x_0$ and were not directly infected by infected nodes in $x_0$ (otherwise the timers for such nodes would have fired by time $h$ implying that the nodes would be in $X_1$).  Thus as an upper bound on the number of infected nodes in $X(h)-X_1$ we can  look at all the infections caused by nodes in $X_1-x_0$ over a time period of length $h$.  Invoking \nameref{lemS5} with a graph initially in state $x_0'=X_1-x_0$ proves the claim.
\end{proof}

\begin{lemma}[Lemma S7]\label{lemS7}
If $\E[|f(x,A_1)-f(z,A_1)|] \leq  L |x-z|$ for all $x\geq z$ then $\E[|f(x,A_1)-f(z,A_1)|] \leq  L |x-z|$ for all $x$ and $z$.
\end{lemma}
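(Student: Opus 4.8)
The plan is to reduce the general case to the comparable case that is already assumed, by interposing the componentwise maximum of the two states. Given arbitrary graph states $x,z\in\{0,1\}^n$, define $y$ to be the state given coordinatewise by $y_i=\max(x_i,z_i)$; equivalently $I(y)=I(x)\cup I(z)$. Then $y$ is itself an admissible binary state, and $y\geq x$ as well as $y\geq z$, so the hypothesis applies to each of the ordered pairs $(y,x)$ and $(y,z)$.

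First I would apply the triangle inequality inside the expectation and then invoke the hypothesis on the two comparable pairs:
\[ \E[|f(x,A_1)-f(z,A_1)|] \leq \E[|f(x,A_1)-f(y,A_1)|] + \E[|f(y,A_1)-f(z,A_1)|] \leq L|y-x| + L|y-z|. \]
It then remains to verify the identity $|y-x|+|y-z| = |x-z|$ for the $1$-norm. This is done coordinate by coordinate: if $x_i=z_i$ then $y_i=x_i=z_i$ and the $i$-th terms on both sides vanish; if $x_i\neq z_i$ then $y_i=1$ agrees with whichever of $x_i,z_i$ equals $1$, so exactly one of $|y_i-x_i|,|y_i-z_i|$ equals $1$ and the other equals $0$, while $|x_i-z_i|=1$. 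Summing over the coordinates gives the claimed identity, and combining with the display above yields $\E[|f(x,A_1)-f(z,A_1)|]\leq L|x-z|$.

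There is no substantial obstacle in this argument; the only points requiring a moment's care are that $y$ must be a genuine graph state (which it is, being a binary vector, so that $f(y,\cdot)$ is defined) and that we are working with the $1$-norm throughout, so that the additivity $|y-x|+|y-z|=|x-z|$ holds exactly rather than merely up to a constant — which is precisely why the triangle-inequality detour through $y$ loses nothing.
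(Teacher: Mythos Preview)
Your proof is correct and takes essentially the same approach as the paper: interpose a comparable intermediate state, apply the triangle inequality, and use the $1$-norm identity. The only cosmetic difference is that the paper uses the componentwise minimum $\min(x,z)$ (noting $x\geq\min(x,z)$ and $z\geq\min(x,z)$) whereas you use the componentwise maximum; both choices yield the exact identity $|y-x|+|y-z|=|x-z|$ for binary vectors under the $1$-norm.
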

\begin{proof}
Note that by the triangle inequality, $|f(x,a)-f(z,a)| \leq  |f(x,a)-f(\min(x,z),a)|+|f(z,a)-f(\min(x,z),a)|$ where $\min(x,z)$ is the component wise minimum of $x$ and $z$.  Then by our hypothesis, $\E[|f(x,A_1)-f(z,A_1)|] \leq  L |x-\min(x,z)| + L |z-\min(x,z)|$.  Since we use the 1-norm, $|x-\min(x,z)|+|z-\min(x,z)|=|x-z|$, proving the claim.
\end{proof}

\begin{lemma}[Lemma S8]\label{lemS8}
If $x\geq z$, then $\sum_j |n(j,x)-n(j,z)| \leq  k |x-z|$.
\end{lemma}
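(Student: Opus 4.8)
The plan is to partition the sum over $j$ according to the infection status of $j$ in the two states, and then reinterpret the surviving terms as a count of edges incident to the set of newly infected nodes.

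First I would set $I' = I(x)\setminus I(z)$, the nodes infected in $x$ but not in $z$; since $x\geq z$ this is well defined and $|x-z| = |I'|$. I would then split the nodes $j$ into three classes. If $j\in I(z)$, then $j$ is infected in both states, so $n(j,x)=n(j,z)=0$ and the term vanishes. If $j\in I'$, then $j$ is susceptible in $z$ but infected in $x$, so $n(j,x)=0$ and $|n(j,x)-n(j,z)| = n(j,z)$, which is the number of edges from $j$ to $I(z)$. If $j\in S(x)$, then $j$ is susceptible in both states, and because $x\geq z$ we have $n(j,x)\geq n(j,z)$ with $n(j,x)-n(j,z)$ equal to the number of neighbors of $j$ lying in $I'$.

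Summing the nonzero contributions, $\sum_j|n(j,x)-n(j,z)|$ equals the number of edges between $I'$ and $I(z)$ plus the number of edges between $I'$ and $S(x)$. Since $I(z)$, $I'$, and $S(x)$ partition the node set, these two families of edges are disjoint and each such edge has exactly one endpoint in $I'$; hence the sum equals the number of edges with exactly one endpoint in $I'$. This is at most $\sum_{i\in I'}\deg(i)$, which is at most $k|I'| = k|x-z|$ since the maximum degree is $k$, proving the claim.

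The one thing to be careful about is double counting: a crude bound treating the ``$j\in I'$'' terms and the ``$j\in S(x)$'' terms each separately as ``edges touching $I'$'' would give the weaker constant $2k$. The point is that these two groups of edges are edge-disjoint and together are exactly the edges leaving $I'$, so a single application of the max-degree bound suffices. Beyond this observation the argument is just a short combinatorial identity, so I do not expect any real obstacle.
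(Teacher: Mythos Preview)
Your proof is correct and is essentially the same as the paper's alternative proof: both split the sum over $j\in I'=I(x)\setminus I(z)$ (contributing $n(j,z)$) and $j\in S(x)$ (contributing the number of neighbors of $j$ in $I'$), reinterpret the latter as edges between $S(x)$ and $I'$, and then observe that the combined contribution is bounded by the total degree of the nodes in $I'$. Your phrasing in terms of ``edges with exactly one endpoint in $I'$'' is arguably cleaner than the paper's per-node bound $n(j,z)+s(j,x)\le n(j,z)+s(j,z)\le k$, but the underlying combinatorics is identical.
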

\begin{proof}
If we do not require that $n(j,x)=0$ when $j$ is infected, then proving the claim is simple.  In that case, $n(j,x)-n(j,z)\geq 0$ is the number of neighbors of $j$ that are infected in $x$ but not in $z$.  Hence, $\sum_j n(j,x)-n(j,z)$ is the sum of the degrees of the nodes infected in $x$ but not in $z$. There are $|x-z|$ such nodes, each of degree at most $k$, proving the claim.

However, we require a different proof since we define $n(j,x)=0$ when $x_j=1$. Suppose $x_j=0$.  Then since to $x\geq z$, $z_j=0$, and $n(j,x) \supseteq n(j,z)$. Hence, $|n(j,x)-n(j,z)|=|\{(j,j') : x_{j'}=1, z_{j'}=0\}|$.  Suppose $x_j=1$, and thus, $n(j,x)=0$. Then $|n(j,x)-n(j,z)|=n(j,z)$.  This equals 0 if $z_j=1$ and $|\{(j,j') : z_{j'}=1\}|$ if $z_j=0$.  Hence,
\begin{align*}
\sum_j |n(j,x)-n(j,z)| &=  |\{(j,j') : x_j=0, x_{j'}=1, z_{j'}=0\}| + |\{(j,j') : x_j=1, z_j=0, z_{j'}=1\}|\\
&= |\{(j,j') : x_j=0, x_{j'}=1, z_{j'}=0\}| \\
&\quad + |\{(j,j') : z_j=1, x_{j'}=1, z_{j'}=0\}| \qquad\text{switching $j$ and $j'$,}\\
&= |\{(j,j') : x_j=z_j=0, x_{j'}=1, z_{j'}=0\}| \\
&\quad + |\{(j,j') : x_j=z_j=1, x_{j'}=1, z_{j'}=0\}| \qquad \text{since $x_j\geq z_j$,}\\
&\leq  |\{(j,j') : x_{j'}=1, z_{j'}=0\}| \leq  k |x-z|.
\end{align*}

Here is an alternative proof. Note, $\sum_j |n(j,x) - n(j,z)| = $
\begin{align*}
 =& \sum_{j \in I(x)\setminus I(z)} n(j,z) + \sum_{j \in S(x)} |n(j,x) - n(j,z)|.\\
 \intertext{Since, $x\geq z$, when $j\in S(x)$, $n(j,x) \geq n(j,z)$ and thus,}
 =& \sum_{j \in I(x)\setminus I(z)} n(j,z) + \sum_{e_{ij} \in E} 1(i \in S(x), j \in I(x)\setminus I(z))\\
 =& \sum_{j \in I(x)\setminus I(z)} n(j,z) + \sum_{j \in I(x)\setminus I(z)} s(j,x)\\
 \intertext{where $s(j,x)$ is the number of susceptibles connected to $j$. Hence,}
\leq&  \sum_{j \in I(x)\setminus I(z)} n(j,z) + s(j,z)\\
\leq&  \sum_{j \in I(x)\setminus I(z)} k \leq |x-z|k.\\
\end{align*}
\end{proof}

\begin{lemma}[Lemma 3A]\label{lem3Aapx} For the SI process, $\E[|d_i|] \leq  C_{SI} h$ where $C_{SI}=n k^2 \exp(k-2)$ for $h\leq 1$.
\end{lemma}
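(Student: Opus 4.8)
The plan is to unwind the definition of the local error, reduce $\E[|d_i|]$ to a one-step comparison by the Markov property, and then control that comparison with the negative-binomial domination already proved (Lemmas S1, S2, S6).

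I would first observe that $d_i\ge 0$ componentwise. By the reasoning in the proof of Theorem 1A, every timer that fires during a single DTS step also fires in the DES over the same interval, so $\tilde g(x,a)\ge g(x,a)$ for every state $x$ and every $a$; hence $X(ih)=\tilde g(X((i-1)h),A_i)\ge g(X((i-1)h),A_i)$. Writing $M=|g(X((i-1)h),A_i)|$ for the number of infected nodes after one DTS step applied to the \emph{true} state at time $(i-1)h$, and using that $|\cdot|$ is the $1$-norm, this gives $|d_i|=(N(ih)-M)/h$, so that $\E[|d_i|]=\E[N(ih)-M]/h$.

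Next I would condition on $X((i-1)h)$. By time-homogeneity of the CTMC and of the DTMC, given $X((i-1)h)=x$ the coupled pair $(X(ih),g(x,A_i))$ has the same law as $(X(h),X_1)$ for a run begun at $x$, so $\E[N(ih)-M\mid X((i-1)h)=x]=\E[N(h)-N_1\mid X(0)=x]$. Lemma S6 then gives, conditionally on the DTS outcome, $N(h)-N_1\le\nbdist((N_1-N(0))r,p)$ in distribution, with $r=k/(k-2)$ and $p=1-\exp(-h(k-2))$ (this uses $k\ge 3$; for $k\le 2$ no chain of secondary infections can outrun a rate-$\le kN(0)$ Poisson bound, which gives the claim directly). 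Averaging and applying Lemma S1, $\E[\nbdist(\rho,p)]=\rho\,p/(1-p)$, yields $\E[N(h)-N_1\mid X(0)=x]\le\E[N_1-N(0)\mid X(0)=x]\cdot r\,p/(1-p)$; since $p/(1-p)=\exp(h(k-2))-1$, Lemma S2 with $c=k-2$ and $h\le 1$ bounds $r\,p/(1-p)\le\frac{k}{k-2}h(k-2)\exp(k-2)=kh\exp(k-2)$.

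Finally I would bound the expected number of nodes infected in one DTS step: given $X(0)=x$ this is $\sum_{j\in S(x)}(1-\exp(-h\,n(j,x)))\le h\sum_j n(j,x)$, and $\sum_j n(j,x)$, the number of susceptible--infected edges, is at most $k|x|\le kn$, so $\E[N_1-N(0)\mid X(0)=x]\le hk|x|$. Chaining the three bounds gives $\E[N(ih)-M\mid X((i-1)h)]\le hk\,N((i-1)h)\cdot kh\exp(k-2)=k^2h^2\exp(k-2)\,N((i-1)h)$; taking expectations, using $N((i-1)h)\le n$, and dividing by $h$ gives $\E[|d_i|]\le nk^2\exp(k-2)h=C_{SI}h$. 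The one step I expect to require care is precisely the reduction to a single DTS step: Lemma S6 is stated for a run started at a root configuration, so one must verify that conditioning on $X((i-1)h)$ and on $X_1$ really lands us in its hypotheses. After that the argument is bookkeeping --- the final shape of the bound comes from the $1/h$ in the definition of $d_i$ being absorbed by one factor $h$ from $\E[N_1-N(0)]=O(hk|x|)$ and a second from $r\,p/(1-p)=O(kh)$.
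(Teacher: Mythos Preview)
Your proposal is correct and follows essentially the same route as the paper: reduce to the first step by the Markov property, use $\tilde g(x,a)\ge g(x,a)$ (Theorem~1A) to write $|d_1|=(N(h)-N_1)/h$, invoke Lemma~S6 for the negative-binomial domination, then Lemmas~S1 and~S2, and finally bound $\E[N_1]-N(0)\le nkh$. The only cosmetic differences are that you bound the number of $S$--$I$ edges by $k|x|$ (infected side) whereas the paper bounds it by $k|S(x_0)|$ (susceptible side), and that you explicitly flag the $k\le 2$ edge case, which the paper leaves implicit in its use of $r=k/(k-2)$.
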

\begin{proof}
Note that \[ d_i = (\tilde{X}_i - \tilde{X}_{i-1})/h - [g(\tilde{X}_{i-1},A_i) - \tilde{X}_{i-1}]/h
 = (\tilde{g}(\tilde{X}_{i-1},A_i)-g(\tilde{X}_{i-1},A_i))/h.\]
It suffices to bound $\E[|d_1|]$ for all $x_0$ to find a bound for $\E[|d_i|]$. Now, 
\begin{align*}
\E[|d_1|] &= \E[|\tilde{X}_1 - X_1|]/h\\
 &= \E[N(h) - N_1]/h \qquad \text{by \nameref{thm1A}}\\
 &\leq  \E[\nbdist((N_1-N(0))r,p)]/h \qquad \text{by \nameref{lemS6}},\\
\intertext{where $r=k/(k-2)$ and $p=1-\exp(-h (k-2))$.}
 &= (\E[N_1]-N(0))k/(k-2) (\exp((k-2)h)-1))/h \qquad \text{by \nameref{lemS1}}\\
 &\leq  (\E[N_1]-N(0)) k \exp(k-2) \qquad \text{by \nameref{lemS2}}.
\end{align*}
Note that 
\begin{align}
\E[N_1]-N(0)&=\sum_{j \in S(x_0)} 1-\exp(-h n(j,x_0)) \nonumber\\
& \leq  \sum_{j \in S(x_0)} h n(j,x_0)\nonumber\\
& \leq  \sum_{j \in S(x_0)} h k\nonumber\\
& \leq  nk h.\label{eq:N1N0}
\end{align}
Thus, $\E[|d_1|] \leq  n k^2 \exp(k-2) h$.
\end{proof}

Since $g(x,a)$ is in $\{0,1\}^n$, it follows that $f(x,a)$ is in $\{-1/h,0,1/h\}^n$ and hence $L=2n/h$ satisfies the inequality $\E[|f(x,A_1)-f(z,A_1)|] \leq  L |x-z|$.  In Lemma 4A and 4B, our goal is to prove this inequality with smaller values of $L$.  It will be convenient to denote the $j$th component of $f(x,a)$ and $g(x,a)$ by $f_j(x,a)$ and $g_j(x,a)$ respectively.  

\begin{lemma}[Lemma 3B] 
For the SIS process, $\E[|d_i|] \leq  C_{SIS} h$ when $h\leq 1$
where $C_{SIS} = nk(k\exp(k-2)+\mu)$.
\end{lemma}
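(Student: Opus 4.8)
The plan is to mirror the proof of \nameref{lem3Aapx} as closely as possible, peeling off one extra term to pay for recoveries. As there, write
\[ d_i = (\tilde{g}(\tilde{X}_{i-1},A_i)-g(\tilde{X}_{i-1},A_i))/h, \]
so that $|d_i| = |\tilde{X}_i - X_i|/h$ with $\tilde{X}_i$ and $X_i$ built from the common randomness $A_i$; by the Markov property and a worst-case choice of the state at time $(i-1)h$ it suffices to bound $\E[|d_1|] = \E[|\tilde{X}_1 - X_1|]/h$ over all initial states $x_0$, and $|\tilde{X}_1 - X_1|$ just counts the nodes on which the DES and the DTS disagree at time $h$.

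The essential new difficulty relative to the SI case is that \nameref{thm1A} fails: recoveries break the monotone coupling, so we cannot reduce $|\tilde{X}_1-X_1|$ to $N(h)-N_1$. Instead I would couple the two simulations through the shared timers (as in the explicit filtration construction for SI): the DTS is exactly the DES run with the time-$0$ timer configuration \emph{frozen} for the whole step (each node in $S(x_0)$ fires only from its incident edges to $I(x_0)$, each node in $I(x_0)$ fires only its single recovery timer, nothing new is ever activated). Hence any node on which the two disagree at time $h$ is traceable to a ``divergent'' event in $[0,h]$ of one of two kinds: (i) an infection travelling along an edge \emph{not} in the frozen configuration --- a secondary infection and its descendants --- or (ii) a recovery/reinfection the frozen dynamics cannot reproduce (a node infected during the step that then recovers, or an initially infected node that recovers and is then reinfected). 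The bound comes from controlling the two types separately and adding.

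For type (i) I would argue exactly as in \nameref{lem3Aapx}: the number of secondary infections together with all their descendants over a step of length $h$ is dominated in distribution by a negative binomial via \nameref{lemS5} and \nameref{lemS6} (one must check that the hazard-rate bounds used there survive the presence of recoveries --- they should, since a recovery only removes timers and hence only decreases hazard rates), \nameref{lemS1} turns this into the expectation $(\E[N_1]-N(0))(k/(k-2))(\exp((k-2)h)-1)$, \nameref{lemS2} bounds $(\exp((k-2)h)-1)/h$ by $(k-2)\exp(k-2)$, and the estimate $\E[N_1]-N(0)\le nkh$ from \eqref{eq:N1N0} gives a contribution $\le nk^2\exp(k-2)h^2$ to $\E[|\tilde{X}_1-X_1|]$, i.e. $\le C_{SI}h$ to $\E[|d_1|]$. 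For type (ii), the point is that each recovery-induced disagreement can be charged to a node that changes status during the step; conditional on the time such a node appears, the probability that the disqualifying extra event occurs in the remaining part of $[0,h]$ is at most $\mu h$ (recovery rate $\mu$) or at most $kh$ (reinfection rate $\le k$). Since the expected number of nodes infected during the step is $\le nkh$ and the expected number of initially infected nodes that recover in the step is $\le n\mu h$, multiplying by the appropriate $\mu h$ or $kh$ factor bounds the type-(ii) contribution to $\E[|\tilde{X}_1-X_1|]$ by $nk\mu h^2$, hence $nk\mu h$ to $\E[|d_1|]$. Adding the two contributions gives $\E[|d_1|]\le nk^2\exp(k-2)h + nk\mu h = nk(k\exp(k-2)+\mu)h = C_{SIS}h$ for $h\le 1$, and the same bound for $\E[|d_i|]$ by the reduction above.

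I expect the main obstacle to be the rigorous bookkeeping for type (ii): making the coupling precise enough that every disagreement is charged to exactly one transient node with no double counting, and checking that recover--reinfect--recover chains do not spawn additional uncharged discrepancies --- together with verifying that the negative-binomial domination of \nameref{lemS5}--\nameref{lemS6} really is insensitive to recoveries. Everything else is the routine $\exp$-estimate bookkeeping already carried out in \nameref{lem3Aapx}.
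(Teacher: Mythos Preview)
Your proposal is correct and takes essentially the same approach as the paper: reduce to $\E[|\tilde X_1-X_1|]/h$, bound the secondary-infection discrepancies via the SI negative-binomial estimate (\nameref{lemS6}, noting that recoveries only lower hazard rates) and the recovery-induced discrepancies by a $\mu h$ factor applied to the expected number of primary infections $\E[Y]\le nkh$ from \eqref{eq:N1N0}. The only bookkeeping difference is that the paper absorbs your case ``initially infected node recovers and is then reinfected'' into the additional-infections bucket rather than type~(ii), leaving its sole recovery term as $\E[Y](1-e^{-\mu h})\le \E[Y]\mu h$; this sidesteps the apparent factor of two in your type-(ii) tally and lands on $C_{SIS}$ exactly.
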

\begin{proof}
As in the proof of \nameref{lem3Aapx}, it suffices to bound $\E[|\tilde{X}_1 - X_1|]/h$ for all $x_0$ to find the bound for $\E[|d_i|]$.  Note that the same subset of the original infected nodes, $I(x_0)$, have a recovery event in the DTS as in the DES because we use the same random numbers, $A_1$.  (We say recovery event, because we are not excluding the possibility that in the DES a node infected at time 0, recovers, and becomes reinfected by time $h$.)  Further, the original infected nodes, $I(x_0)$, directly infect the same set of nodes in the DES as in the DTS, the ones whose infection timers fire before time $h$.  (This is a similar argument as in \nameref{thm1A}.)  We define these directly infected nodes as $Y=g_{SI}(x_0,A_1)-x_0$, where we write $g_{SI}$ to distinguish the transition function for the SI process.

The difference $\tilde{X}_1-X_1$ then consists of the net additional infections caused by the directly infected nodes $Y$, and the recoveries among those directly infected.  (Note that reinfections of recovered nodes (including any in $I(Y)$ and $I(x_0)$) are counted among the additional infections.) We say \emph{net} additional infections because some of these additional infections might recover in the same time step.  Since the SI process does not allow for recoveries, the set of additional infections caused by those in $Y$ is at least as large for the SI process as for the SIS process.  (A precise way of saying this is that if $j\in I(Y)$ infects node $i$ via some sequence of intermediate nodes in the SIS process, then the same occurs in the SI process, since we use the same random numbers, $A_1$.) Hence, we can use \nameref{lemS6} to justify the same upper bound for the additional infections as  in the proof of \nameref{lem3Aapx} for the SI process: $\nbdist(Yr,p)$, where $r=k/(k-2)$ and $p=1-\exp(-h(k-2))$. For any infected node in $Y$, $\Pr[\expdist(\mu)\leq h]$ provides an upper bound on the probability of a recovery.  Thus,
\begin{align*}
\E[|\tilde{X}_1 - X_1|] &\leq \E[\nbdist(Yr,p)]+\E[Y]\Pr[\expdist(\mu)\leq h] \\
&=\E[Y](k/(k-2)(\exp((k-2)h)-1)+(1-\exp(-\mu h))) \qquad \text{by \nameref{lemS1}}\\
&\leq \E[Y](kh\exp(k-2)+\mu h) \qquad \text{by \nameref{lemS2}.}
\end{align*}
As we showed in \eqref{eq:N1N0} of \nameref{lem3Aapx}, $\E[Y]\leq nkh$. Thus, 
\[ \E[|d_1|]=\E[|\tilde{X}_1 - X_1|]/h \leq nk(k\exp(k-2)+\mu)h.\]
\end{proof}

\begin{lemma}[Lemma 4A]\label{lem4Aapx}
For the SI process, $\E[|f(x,A_1)-f(z,A_1)|] \leq  L_{SI} |x-z|$ for all $x$ and $z$, where $L_{SI}=k$.
\end{lemma}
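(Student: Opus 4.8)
The plan is to reduce to comparable states and then bound the error one coordinate at a time. By \nameref{lemS7} it suffices to prove $\E[|f(x,A_1)-f(z,A_1)|]\le k|x-z|$ whenever $x\geq z$, so fix such a pair of states. Since we are working in the $1$-norm, $\E[|f(x,A_1)-f(z,A_1)|]=\sum_{j}\E[|f_j(x,A_1)-f_j(z,A_1)|]$, so it is enough to establish the coordinatewise bound $\E[|f_j(x,A_1)-f_j(z,A_1)|]\le |n(j,x)-n(j,z)|$ for every node $j$; summing this over $j$ and invoking \nameref{lemS8} then yields precisely $\E[|f(x,A_1)-f(z,A_1)|]\le k|x-z|$, i.e.\ the claim with $L_{SI}=k$.

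For the coordinatewise bound I would use the explicit edge-timer construction of $A_1$: for a susceptible node $j$ we have $g_j(x,a)=1$ exactly when $\min_{e\in n(j,x)}a(e)<h$, while $g_j(x,a)=1$ identically when $j$ is infected in $x$; hence $\E[f_j(x,A_1)]=(1/h)(1-\exp(-h\,n(j,x)))$ if $x_j=0$ and $f_j(x,\cdot)\equiv 0$ if $x_j=1$. Now split on $x_j,z_j$. If $z_j=1$ then $x_j=1$ as well, both components of $f$ vanish, and $|n(j,x)-n(j,z)|=0$. If $z_j=0$ and $x_j=1$, then $|f_j(x,A_1)-f_j(z,A_1)|=f_j(z,A_1)$ with $\E[f_j(z,A_1)]=(1/h)(1-\exp(-h\,n(j,z)))\le n(j,z)$ by \nameref{lemS3} (applied to the pair $0\le n(j,z)$ at $t=h$), and since $n(j,x)=0$ this equals $|n(j,x)-n(j,z)|$. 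If $z_j=x_j=0$, node $j$ has weakly more infected neighbors in $x$ than in $z$, with the corresponding edge sets nested, so — because $g$ reads off the same timers $A_1$ — node $j$ is infected from $z$ only if it is infected from $x$; thus $f_j(x,A_1)\ge f_j(z,A_1)$ a.s., $\E[|f_j(x,A_1)-f_j(z,A_1)|]=(1/h)(\exp(-h\,n(j,z))-\exp(-h\,n(j,x)))$, and \nameref{lemS3} (with $n(j,z)\le n(j,x)$ at $t=h$) bounds this by $n(j,x)-n(j,z)=|n(j,x)-n(j,z)|$. Collecting the three cases finishes the coordinatewise estimate, hence the lemma.

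I expect the only delicate part to be the bookkeeping forced by the convention $n(j,x)=0$ for infected $j$: one must check that the infected coordinates of $x$ genuinely contribute $0$ on both sides of the coordinatewise inequality, and that in the mixed case $z_j=0,\ x_j=1$ one cannot compare the two states via \nameref{lemS3} applied to $(n(j,z),n(j,x))$ (since $n(j,x)$ has been zeroed out) but must instead use the cruder estimate $1-\exp(-h\,n(j,z))\le h\,n(j,z)$. The a.s.\ monotonicity $f_j(x,A_1)\ge f_j(z,A_1)$ when $x_j=z_j=0$ is the one genuinely probabilistic ingredient; it is immediate from the shared-randomness construction of $A_1$, but needs to be invoked explicitly, much as in the proof of \nameref{thm1A}.
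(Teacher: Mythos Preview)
Your proposal is correct and follows essentially the same approach as the paper's proof: reduce to $x\geq z$ via \nameref{lemS7}, establish the coordinatewise bound $\E[|f_j(x,A_1)-f_j(z,A_1)|]\le |n(j,x)-n(j,z)|$ by the same three-case split on $(x_j,z_j)$ using the shared edge timers and \nameref{lemS3}, then sum and apply \nameref{lemS8}. The only cosmetic difference is that in the mixed case $x_j=1,\ z_j=0$ the paper states the elementary inequality $1-\exp(-hn(j,z))\le hn(j,z)$ directly rather than deriving it from \nameref{lemS3} with $a=0$.
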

\begin{proof}
Using \nameref{lemS7}, we may assume that $x\geq z$.  Let $y=f(x,A_1)-f(z,A_1)$.  We will show that $\E[|y_j|]\leq |n(j,x)-n(j,z)|$.  By \nameref{lemS8}, 
\[ \E[|f(x,A_1)-f(z,A_1)|] = \sum_j \E[|y_j|] \leq  \sum_j |n(j,x)-n(j,z)| \leq  k |x-z|.\]

Now we show that $\E[|y_j|]\leq |n(j,x)-n(j,z)|$. Note that if $x_j=1$, then $g_j(x,A_1)=1$ a.s. and thus $f_j(x,A_1)=0$ a.s. Hence, if $x_j=z_j=1$, then the inequality holds because $y_j=0$ a.s. If $x_j=1$ and $z_j=0$, then $n(j,x)=0$ and $y_j=g_j(z,A_1)/h$. Futhermore, $\E[g_j(z,A_1)]=1-\exp(-h n(j,z))\leq h n(j,z)$, thus proving the inequality in this case.  We now turn to the remaining case (remember $x_j\geq z_j$), where $x_j=z_j=0$.  In that case, $|y_j|=|g_j(x,A_1)-g_j(z,A_1)|/h$.  Then $g_j(x,A_1)=1(\min_{e \in n(j,x)} A_1(e) \leq  h)$.  Since $n(j,x)\supseteq n(j,z)$, it follows that $g_j(x,A_1)\geq g(z,A_1)$. Hence, $y_j=0$ if $g_j(z,A_1)=1$ or $g_j(x,A_1)=0$. In the remaining case, $g_j(x,A_1)=1$, $g_j(z,A_1)=0$, and $y_j=1/h$.  Thus, 
\begin{gather*}
 \Pr[y_j=1/h]=1-\Pr[g_j(z,A_1)=1 \text{ or } g_j(x,A_1)=0]\\
=1-(\Pr[g_j(z,A_1)=1]+\Pr[g_j(x,A_1)=0])
\intertext{since $g_j(x,A_1)\geq g(z,A_1)$ implies that these are exclusive events}
\Pr[y_j=1/h]=1-(1-\exp(-n(j,z)h)+\exp(-n(j,x)h))=\exp(-n(j,z)h)-\exp(-n(j,x)h).
\end{gather*}
Since $n(j,z)\leq n(j,x)$, we know from \nameref{lemS3} that $\Pr[y_j=1/h]\leq (n(j,x)-n(j,z))h$, proving the inequality for the case of $x_j=z_j=0$.
\end{proof}

\begin{lemma}[Lemma 4B]\label{lem4Bapx}
For the SIS process, $\E[|f(x,A_1)-f(z,A_1)|] \leq  L_{SIS} |x-z|$ for all $x$ and $z$, where $L_{SIS}=k+\mu$.
\end{lemma}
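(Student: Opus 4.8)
The plan is to mirror the proof of Lemma 4A, enlarging the per-coordinate bound by a recovery term. By Lemma S7 (whose proof uses only the triangle inequality and additivity of the $1$-norm, so it applies equally to the SIS process) it suffices to treat the case $x \geq z$. Write $y = f(x,A_1) - f(z,A_1)$ with coordinates $y_j$, and model $A_1$ as carrying, in addition to one $\expdist(1)$ edge timer per edge, one independent recovery timer $R_j \sim \expdist(\mu)$ per node. The target is the coordinatewise estimate
\[ \E[|y_j|] \leq |n(j,x) - n(j,z)| + \mu\,|x_j - z_j|, \]
because summing over $j$, bounding $\sum_j |n(j,x)-n(j,z)|$ by $k|x-z|$ via Lemma S8 and using $\sum_j |x_j - z_j| = |x-z|$, gives $\E[|f(x,A_1)-f(z,A_1)|] \leq (k+\mu)|x-z|$.

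To get the coordinatewise estimate I would split on $(x_j, z_j) \in \{(1,1),(1,0),(0,0)\}$ (recall $x_j \geq z_j$). If $x_j = z_j = 1$, the SIS update rule makes node $j$'s post-step status depend only on $R_j$ --- a node recovering in the step is not reinfected within it --- so $g_j(x,A_1) = g_j(z,A_1) = 1 - 1(R_j \leq h)$, whence $y_j = 0$ and the bound is trivial. If $x_j = z_j = 0$, node $j$ cannot recover, $g_j(\cdot,A_1) = 1(\min_{e\in n(j,\cdot)} A_1(e) \leq h)$ with $n(j,z) \subseteq n(j,x)$, and exactly as in Lemma 4A, $\E[|y_j|] = \bigl(\exp(-n(j,z)h) - \exp(-n(j,x)h)\bigr)/h \leq n(j,x) - n(j,z)$ by Lemma S3. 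If $x_j = 1$ and $z_j = 0$, then $n(j,x) = 0$, while $f_j(x,A_1) = -1(R_j\leq h)/h$ and $f_j(z,A_1) = 1(\min_{e\in n(j,z)} A_1(e)\leq h)/h$ have opposite signs, so by independence $\E[|y_j|] = \bigl((1-\exp(-\mu h)) + (1-\exp(-n(j,z)h))\bigr)/h \leq \mu + n(j,z) = \mu|x_j-z_j| + |n(j,x)-n(j,z)|$.

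The work here is almost entirely bookkeeping; the one genuine subtlety --- and the step I would be most careful about --- is pinning down exactly which random variables $g_j(\cdot, A_1)$ depends on in each case. This rests on the DTS conventions that an initially infected node's fate over the step is governed solely by its own recovery timer (no within-step reinfection), and that an initially susceptible node sees a frozen neighborhood for the whole step, so its infection probability is the same function of the edge timers as in the SI process. Once those dependencies are fixed the three cases decouple, the recovery term separates cleanly, and the remaining estimates ($1 - \exp(-ch) \leq ch$ and Lemma S3) are identical to those in Lemma 4A.
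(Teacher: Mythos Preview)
Your proof is correct and follows the same structure as the paper's: reduce to $x\geq z$ via Lemma~S7, establish the per-coordinate bound $\E[|y_j|]\leq |n(j,x)-n(j,z)|+\mu|x_j-z_j|$ by the same three-case split, and sum using Lemma~S8. The only cosmetic difference is in Case~3 ($x_j=1$, $z_j=0$): the paper enumerates $\Pr[y_j=1/h]$ and $\Pr[y_j=2/h]$ separately to reach $(2-\exp(-\mu h)-\exp(-n(j,z)h))/h$, whereas you get the same expression more directly from the sign observation $f_j(x,A_1)\leq 0\leq f_j(z,A_1)$ (note that it is this sign property, not independence, that makes $|y_j|=|f_j(x,A_1)|+|f_j(z,A_1)|$).
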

\begin{proof}
We take the same approach as in the proof for \nameref{lem4Aapx} and use \nameref{lemS7} to assume that $x \geq  z$. Again, we let $y_j = |f_j(x,A_1)-f_j(z,A_1)|$. This time will show that $\E[y_j] \leq  |n(j,x) - n(j,z)| + \mu |x_j-z_j|$. Then applying \nameref{lemS8} proves our claim:
\[ \E[|f(x,A_1) - f(z,A_1)|] \leq  \sum_j |n(j,x)-n(j,z)| + \mu |x_j-z_j| \leq  (k + \mu)|x-z|.\]
We now prove the inequality for $y_j$ by considering three cases (recall $x\geq z$):

Case 1: $x_j = z_j = 0.$ The reasoning for this case is the same as in \nameref{lem4Aapx}. Now $y_j = |g_j(x,A_1)-g_j(z,A_1)|/h \in \{0,1/h\}$. Now, $y_j = 0$ if $g_j(z,A_1) = 1$ or $g_j(x,A_1) = 0$. Since these are mutually exclusive events (as $x\geq z$), 
\begin{align*}
\E[y_j] &= \Pr[y_j=1/h]/h=(1-(1 - \exp(-n(j,z)h) + \exp(-n(j,x)h)))/h\\
&= (\exp(-n(j,z)h)-\exp(-n(j,x)h))/h.
\end{align*}
Since $n(j,z)\leq n(j,x)$, we know from \nameref{lemS3}, $\E[y_j] \leq  |n(j,x)-n(j,z)|$.

Case 2: $x_j = z_j = 1$. In this case $y_j = |g_j(x,A_1)-g_j(z,A_1)|/h = 0$ a.s. because the recovery time of node $j$ depends only on $A_1$ and not on the state of any other nodes.  Thus the inequality holds trivially. 

Case 3: $x_j = 1$ and $z_j = 0$. Here, $y_j = |g_j(x,A_1) - 1 - g_j(z,A_1)|/h \in \{0,1/h,2/h\}$. Specifically, $y_j = 1/h$ when either $g_j(x,A_1) = 0$ and $g_j(z,A_1) = 0$ OR $g_j(x,A_1) = 1$ and $g_j(z,A_1) = 1$. Thus, \[ \Pr[y_j = 1/h] = (1 - \exp(-\mu h))\exp(-n(j,z) h) + \exp(-\mu h)(1 - \exp(-n(j,z) h)).\] 
Similarly, $y_j = 2/h$ only if $g(x,A_1) = 0$ and $g(z,A_1) = 1$. Thus $\Pr[y_j = 2/h] = (1-\exp(-\mu h))(1 - \exp(-n(j,z)h))$. Thus, 
\begin{align*}
\E[y_j] &= \Pr[y_j = 2/h](2/h) + \Pr[y_j = 1/h](1/h) \\
& = (2-\exp(-\mu h)-\exp(-n(j,z)h))/h \leq \mu + n(j,z)\\
&\leq  |n(j,x) - n(j,z)| + \mu |x_j-z_j|,
\end{align*}
since $z_j=0$, $x_j=1$, and $n(j,x)=0$ by definition.
\end{proof}

\begin{lemma}[Lemma 5]
Euler's method is 0-stable, that is for any two sequences of random variables $(Y_i)$ and $(Z_i)$ adapted to $(F_i)$ with $Y_0=Z_0$,
\[ \E[|Y_i-Z_i|] \leq  K \max_{1\leq j\leq i} \E[|D(Y_j,Y_{j-1},A_j)-D(Z_j,Z_{j-1},A_j)|],\]
where $K=(1/L)(\exp(LT)-1)$, $T=ih$, and $L$ is the Lipschitz constant from Lemma 4 (i.e., \nameref{lem4Aapx} or \nameref{lem4Bapx} depending on the process).
Depending on the process we may write $K$ as $K_{SI}$ or $K_{SIS}$.
\end{lemma}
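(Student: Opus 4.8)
The plan is to follow the standard 0-stability argument for Euler's method as in \cite{ascher1998computer}, adapting it to handle the randomness in $(Y_i)$, $(Z_i)$, and $(A_i)$. First I would unwind the definition of the difference operator. Writing $\delta_j = D(Y_j,Y_{j-1},A_j)-D(Z_j,Z_{j-1},A_j)$ and recalling $D(x',x,a) = (x'-x)/h - f(x,a)$, one gets the recursion
\[ Y_j - Z_j = (Y_{j-1}-Z_{j-1}) + h\bigl(f(Y_{j-1},A_j)-f(Z_{j-1},A_j)\bigr) + h\,\delta_j . \]
Taking the 1-norm and applying the triangle inequality gives $|Y_j - Z_j| \leq |Y_{j-1}-Z_{j-1}| + h|f(Y_{j-1},A_j)-f(Z_{j-1},A_j)| + h|\delta_j|$.

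Second, I would handle the Lipschitz term. Lemma 4 is stated for deterministic arguments, but $Y_{j-1}$ and $Z_{j-1}$ are $F_{j-1}$-measurable while $A_j$ is independent of $F_{j-1}$; so conditioning on $F_{j-1}$ and applying Lemma 4 pointwise yields $\E[|f(Y_{j-1},A_j)-f(Z_{j-1},A_j)| \mid F_{j-1}] \leq L|Y_{j-1}-Z_{j-1}|$, and hence, after taking expectations, $\E[|f(Y_{j-1},A_j)-f(Z_{j-1},A_j)|] \leq L\,\E[|Y_{j-1}-Z_{j-1}|]$. Combining this with the previous inequality and writing $\delta = \max_{1\leq m\leq i}\E[|\delta_m|]$, I obtain the scalar recursion $\E[|Y_j-Z_j|] \leq (1+hL)\,\E[|Y_{j-1}-Z_{j-1}|] + h\delta$ for $1 \leq j \leq i$.

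Third, I would solve this recursion. Since $Y_0 = Z_0$ we have $\E[|Y_0 - Z_0|] = 0$, so by induction
\[ \E[|Y_i - Z_i|] \leq h\delta \sum_{m=0}^{i-1}(1+hL)^m = \frac{\delta}{L}\bigl((1+hL)^i - 1\bigr). \]
Finally, the elementary bound $1 + hL \leq \exp(hL)$ gives $(1+hL)^i \leq \exp(ihL) = \exp(LT)$, so $\E[|Y_i - Z_i|] \leq \tfrac{1}{L}(\exp(LT)-1)\,\delta = K\delta$, which is the claim.

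The main obstacle is the measurability/independence step in the second paragraph: Lemma 4 only provides a Lipschitz bound for fixed states, so one must use that $(Y_i)$ and $(Z_i)$ are adapted to $(F_i)$ and that $A_j$ is independent of $F_{j-1}$ in order to promote it to the random-argument bound $\E[|f(Y_{j-1},A_j)-f(Z_{j-1},A_j)|] \leq L\,\E[|Y_{j-1}-Z_{j-1}|]$. Everything else is the routine discrete Gr\"onwall-type estimate. The argument goes through verbatim with variable step sizes $(h_j)$ by taking $h = \max_{1\leq j\leq i} h_j$ in the geometric sum.
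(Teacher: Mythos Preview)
Your proof is correct and follows essentially the same route as the paper's: both unwind the definition of $D$ to obtain the recursion $\E[|Y_j-Z_j|]\leq(1+hL)\E[|Y_{j-1}-Z_{j-1}|]+h\delta$, then sum the resulting geometric series and bound $(1+hL)^i$ by $\exp(LT)$. Your treatment of the Lipschitz step via conditioning on $F_{j-1}$ is in fact slightly more explicit than the paper's, which just invokes independence of $A_j$ from $(Y_{j-1},Z_{j-1})$ directly.
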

\begin{proof}
As mentioned in the main body of the paper, the proof follows \cite[p41]{ascher1998computer}. We define $S_i=Y_i-Z_i$ and $\theta = \max_{1\leq j\leq i} \E[|D(Y_j,Y_{j-1},A_j)-D(Z_j,Z_{j-1},A_j)|]$. Thus for all $j$,
\begin{align*}
\theta &\geq  \E[|D(Y_j,Y_{j-1},A_j)-D(Z_j,Z_{j-1},A_j)|] \qquad \text{by definition,}\\
&= \E[|S_j/h - S_{j-1}/h - (f(Y_{j-1},A_j) - f(Z_{j-1},A_j))|] \qquad \text{using the definitions of $D$ and $S$,}\\
&\geq  \E[|S_j|]/h - \E[|S_{j-1}|]/h - \E[|f(Y_{j-1},A_j) - f(Z_{j-1},A_j)|] \qquad\text{using the triangle inequality.}
\end{align*}
Since $A_j$ is distributed like $A_1$ and is independent of $Y_{j-1}$ and $Z_{j-1}$ due to the filtration, we can apply Lemma 4 to obtain, 
$ \theta \geq  \E[|S_j|]/h - \E[|S_{j-1}|]/h - L \E[|S_{j-1}|]$.
Thus, 
$ \E[|S_j|] \leq  \theta h + \E[|S_{j-1}|] (1 + hL)$.
Hence by induction,
\begin{align*}
\E[|S_i|] &\leq  \theta h \sum_{j=1}^i (1 + hL)^{i-j} \qquad \text{since $S_0=0$,}\\
&\leq  \theta (\exp(Lhi)-1)/L.
\end{align*}
\end{proof}
\end{document}